\documentclass[11pt, a4paper]{article}
\usepackage[T1]{fontenc}
\usepackage{amsfonts}
\usepackage{amsmath}
\usepackage{amssymb}
\usepackage{amsthm}
\usepackage{bbm}
\usepackage{bm}
\usepackage{mathrsfs}
\usepackage{verbatim}
\usepackage{setspace}
\usepackage{enumitem}
\theoremstyle{plain}

\newtheorem{theorem}{Theorem}[section]
\newtheorem{proposition}[theorem]{Proposition}
\newtheorem{lemma}[theorem]{Lemma}
\newtheorem{corollary}[theorem]{Corollary}

\theoremstyle{definition}

\newtheorem{remark}[theorem]{Remark}

\theoremstyle{remark}

%\newtheorem*{acknowledgements}{Acknowledgements}

%\renewcommand{\labelenumii}{(enumii)}

%%%%%%%%%%%%% Bibliography %%%%%%%%%%%%%%%%%%%%%%

\renewenvironment{thebibliography}[1]{%
\begin{oldthebibliography}{#1}%
\setlength{\baselineskip}{.9em}
\linespread{.9}
\small
\setlength{\parskip}{0ex}%
\setlength{\itemsep}{.1em}%
}%
{%
\end{oldthebibliography}%
}
%%%%%%%%%%%%%%%%%%%%%%%%%%%%%%%%%%%%

%math spacing
\newcommand{\q}{\quad}

%math symbols
\newcommand{\eps}{\varepsilon}
\newcommand{\idmat}{\mathbbm{1}} %symbol for identity matrix/function
\newcommand{\N}{\mathbb{N}}
\newcommand{\Z}{\mathbb{Z}}

\newcommand{\R}{\mathbb{R}}

\renewcommand{\S}{\mathbb{S}}

\renewcommand{\L}{\mathbb{L}}
\newcommand{\cF}{\mathcal{F}}

\newcommand{\cP}{\mathcal{P}}
\newcommand{\cQ}{\mathcal{Q}}

\newcommand{\cE}{\mathcal{E}}

\newcommand{\bD}{\mathbf{D}}

\newcommand{\br}[1]{\langle #1 \rangle}

\DeclareMathOperator{\tr}{trace}

%\DeclareMathOperator{\Lip}{Lip}

%\DeclareMathOperator{\Cov}{Cov}
%\newcommand{\clos}{\overline}

%\newcommand{\bmat}{\begin{pmatrix}}
%\newcommand{\emat}{\end{pmatrix}}

%temp math
\newcommand{\as}{\mbox{-a.s.}}
\renewcommand{\ae}{\mbox{-a.e.}}

\newcommand{\1}{\mathbf{1}}

\numberwithin{equation}{section}

\begin{document}

\title{\vspace{-1.1 cm} Weak Approximation of $G$-Expectations\\
\date{March 2, 2011}
\author{
  Yan Dolinsky%
  \thanks{
  ETH Zurich, Dept.\ of Mathematics, CH-8092 Zurich, \texttt{yan.dolinsky@math.ethz.ch}
  }
  \and
  Marcel Nutz%
  \thanks{
  ETH Zurich, Dept.\ of Mathematics, CH-8092 Zurich, \texttt{marcel.nutz@math.ethz.ch}
  }
  \and
  H.\ Mete Soner%
  \thanks{
  ETH Zurich, Dept.\ of Mathematics, CH-8092 Zurich, and Swiss Finance Institute, \texttt{mete.soner@math.ethz.ch}
  }
 }
}
\maketitle \vspace{-1.5em}

\begin{abstract}
We introduce a notion of volatility uncertainty in discrete time and define the corresponding analogue of Peng's $G$-expectation.
In the continuous-time limit, the resulting sublinear expectation converges weakly to the $G$-expectation. This can be seen as a Donsker-type result for the $G$-Brownian motion.
\end{abstract}

{\small
\noindent \emph{Keywords} $G$-expectation, volatility uncertainty, weak limit theorem

%\vspace{1em}

\noindent \emph{AMS 2000 Subject Classifications}
60F05, %Central limit and other weak theorems
60G44, %Martingales with continuous parameter
91B25, %Asset pricing models
91B30  %Risk theory, insurance

\noindent \emph{JEL Classifications} G13, G32
%G13 - Contingent Pricing; Futures Pricing
%G32 - Financing Policy; Financial Risk and Risk Management
%D81 - Criteria for Decision-Making under Risk and Uncertainty
}\\

\noindent \emph{Acknowledgements}
Research supported by
European Research Council Grant 228053-FiRM, Swiss National Science Foundation
Grant PDFM2-120424/1, Swiss Finance Institute and ETH Foundation.

\section{Introduction}

The so-called $G$-expectation~\cite{Peng.07, Peng.08, Peng.10} is a nonlinear expectation
advancing the notions of backward stochastic differential equations (BSDEs)~\cite{PardouxPeng.90} and \mbox{$g$-expectations}~\cite{Peng.97}; see also~\cite{CheriditoSonerTouziVictoir.07, SonerTouziZhang.2010bsde} for a related theory of second order BSDEs.
A $G$-expectation $\xi\mapsto \cE^G(\xi)$ is a sublinear function which maps random variables $\xi$ on the canonical space $\Omega=C([0,T];\R)$ to the real numbers. The symbol $G$ refers to a given function $G: \R\to\R$ of the form
\[
  G(\gamma)=\frac{1}{2}(R\gamma^+-r\gamma^-)=\frac{1}{2}\sup_{a\in [r,R]} a\gamma,
\]
where $0\leq r\leq R<\infty$ are fixed numbers.
More generally, the interval $[r,R]$ is replaced by a set $\bD$ of nonnegative matrices in the multivariate case. The extension to a random set $\bD$ is studied in~\cite{Nutz.10Gexp}.

The construction of $\cE^G(\xi)$ runs as follows.
When $\xi=f(B_T)$, where $B_T$ is the canonical process at time $T$ and $f$ is a sufficiently regular function, then $\cE^G(\xi)$ is defined to be the initial value $u(0,0)$ of the solution of the nonlinear backward heat equation $-\partial_t u - G(u_{xx})=0$ with terminal condition $u(\cdot,T)=f$.
The mapping $\cE^G$ can be extended to random variables of the form $\xi=f(B_{t_1},\dots,B_{t_n})$ by a stepwise evaluation of the PDE and then to the completion $\L^1_G$ of the space of all such random variables. The space $\L^1_G$ consists of so-called quasi-continuous functions and contains in particular all bounded continuous functions on $\Omega$; however, not all bounded measurable functions are included (cf.~\cite{DenisHuPeng.2010}).
While this setting is not based on a single probability measure, the so-called $G$-Brownian motion is given by the canonical process $B$ ``seen'' under $\cE^G$ (cf.\ \cite{Peng.10}). It reduces to the standard Brownian motion if $r=R=1$ since $\cE^G$ is then the (linear) expectation under the Wiener measure.

In this note we introduce a discrete-time analogue of the $G$-expectation and we prove a convergence result which resembles Donsker's theorem for the standard Brownian motion; the main purpose is to provide additional intuition for $G$-Brownian motion and volatility uncertainty. Our starting point is the dual view on $G$-expectation via volatility uncertainty~\cite{DenisHuPeng.2010, DenisMartini.06}: We consider the representation
\begin{equation}\label{eq:GexpIntro}
  \cE^G(\xi)=\sup_{P\in\cP} E^P[\xi],
\end{equation}
where $\cP$ is a set of probabilities on $\Omega$ such that under any $P\in\cP$, the canonical process $B$ is a martingale with volatility $d\br{B}/dt$ taking values in $\bD=[r,R]$, $P\times dt$-a.e. Therefore, $\bD$ can be understood as the domain of (Knightian) volatility uncertainty and $\cE^G$ as the corresponding worst-case expectation. In discrete-time, we translate this to uncertainty about the conditional variance of the increments. Thus we define a sublinear expectation $\cE^n$ on the $n$-step canonical space in the spirit of~\eqref{eq:GexpIntro}, replacing $\cP$ by a suitable set of martingale laws. A natural push-forward then yields a sublinear expectation on $\Omega$, which we show to converge weakly to $\cE^G$ as $n\to\infty$, if the domain $\bD$ of uncertainty is scaled by $1/n$ (cf.~Theorem~\ref{th:limit}). The proof relies on (linear) probability theory; in particular, it does not use the central limit theorem for sublinear expectations~\cite{Peng.10, Peng.10CLT}. The relation to the latter is nontrivial since our discrete-time models do not have independent increments. We remark that quite different approximations of the $G$-expectation (for the scalar case) can be found in discrete models for financial markets with transaction costs~\cite{Kusuoka.95} or illiquidity~\cite{DolinskySoner.11}.

The detailed setup and the main result are stated in Section~\ref{se:main}, whereas the proofs and some ramifications are given in Section~\ref{se:proof}.

\section{Main Result}\label{se:main}

We fix the dimension $d\in\N$ and denote by $|\cdot|$ the Euclidean norm on $\R^d$. Moreover, we denote by $\S^d$ the space of $d\times d$ symmetric matrices and by $\S^d_+$ its subset of nonnegative definite matrices. We fix a nonempty, convex and compact set
$\bD\subseteq \S^d_+$; the elements of $\bD$ will be the possible values of our volatility processes.

\paragraph{Continuous-Time Formulation.}
Let $\Omega=C([0,T];\R^d)$ be the space of $d$-dimensional continuous paths $\omega=(\omega_t)_{0\leq t\leq T}$ with time horizon $T\in(0,\infty)$, endowed with the uniform norm $\|\omega\|_\infty=\sup_{0\leq t\leq T} |\omega_t|$. We denote by $B=(B_t)_{0\leq t\leq T}$ the canonical process $B_t(\omega)=\omega_t$ and by $\cF_t:=\sigma(B_s,\, 0\leq s\leq t)$ the canonical filtration.
A probability measure $P$ on $\Omega$ is called a \emph{martingale law} if $B$ is a $P$-martingale and $B_0=0$ $P$-a.s. (All our martingales will start at the origin.) We set
\[
  \cP_\bD=\big\{P\mbox{ martingale law on }\Omega:\,d\br{B}_t/dt \in \bD,\; P\times dt\ae\big\},
\]
where $\br{B}$ denotes the matrix-valued process of quadratic covariations. We can then define the sublinear expectation
\[
  \cE_\bD(\xi):=\sup_{P\in \cP_\bD} E^P[\xi]\quad \mbox{for any random variable }\xi: \Omega\to\R
\]
such that $\xi$ is $\cF_T$-measurable and $E^P|\xi|<\infty$ for all $P\in\cP_\bD$. The mapping $\cE_\bD$ coincides with the $G$-expectation (on its domain $\L^1_G$) if $G:\S^d\to\R$ is (half) the support function of $\bD$; i.e.,
$G(\Gamma)=\sup_{A\in\bD} \tr (\Gamma A)/2$. Indeed, this follows from~\cite{DenisHuPeng.2010} with an additional density argument as detailed in Remark~\ref{rk:GexpIdentification} below.

\paragraph{Discrete-Time Formulation.}
Given $n\in\N$, we consider $(\R^d)^{n+1}$ as the canonical space of $d$-dimensional paths in discrete time $k=0,1,\dots,n$.
We denote by $X^n=(X^n_k)_{k=0}^n$ the canonical process defined by $X^n_k(x)=x_k$ for $x=(x_0,\dots,x_n)\in(\R^d)^{n+1}$. Moreover,
$\cF^n_k=\sigma (X^n_i,\, i=0,\dots,k)$ defines the canonical filtration $(\cF^n_k)_{k=0}^n$. We also introduce $0\leq r_\bD\leq R_\bD<\infty$ such that $[r_\bD,R_\bD]$ is the spectrum of $\bD$; i.e.,
\[
  r_\bD=\inf_{\Gamma\in\bD} \|\Gamma^{-1}\|^{-1}\quad\mbox{and}\quad R_\bD=\sup_{\Gamma\in\bD} \|\Gamma\|,
\]
where $\|\cdot\|$ denotes the operator norm and we set $r_\bD:=0$ if $\bD$ has an element which is not invertible. We note that $[r_\bD,R_\bD]=\bD$ if $d=1$.
Finally, a probability measure $P$ on $(\R^d)^{n+1}$ is called a martingale law if $X^n$ is a $P$-martingale and $X^n_0=0$ $P$-a.s.
Denoting by $\Delta X^n_k=X^n_k-X^n_{k-1}$ the increments of $X^n$, we can now set
\[
  \cP^n_\bD=
    \left\{\hspace{-4pt}
      \begin{array}{l}
        P\mbox{ martingale law on }(\R^d)^{n+1}:\,\mbox{ for }k=1,\dots,n,\\[.2em]
        E^P[\Delta X_k^n (\Delta X_k^n)'|\cF^n_{k-1}] \in \bD\mbox{ and }
        d^2r_\bD\leq |\Delta X^n_k|^2\leq d^2R_\bD,\;
        P\as\hspace{-4pt}
      \end{array}
    \right\},
\]
where prime ($'$) denotes transposition. Note that $\Delta X^n_k$ is a column vector, so that $\Delta X^n (\Delta X^n)'$ takes values in $\S^d_+$. We introduce the sublinear expectation
\[
  \cE^n_\bD(\psi):=\sup_{P\in \cP^n_\bD} E^P[\psi]\quad \mbox{for any random variable }\psi: (\R^d)^{n+1}\to\R
\]
such that $\psi$ is $\cF^n_n$-measurable and $E^P|\psi|<\infty$ for all $P\in\cP^n_\bD$, and we think of
$\cE^n_\bD$ as a discrete-time analogue of the $G$-expectation.

\begin{remark}
  The second condition in the definition of $\cP^n_\bD$ is motivated by the desire to generate the volatility uncertainty by a \emph{small} set of scenarios; we remark that the main results remain true if, e.g., the lower bound $r_\bD$ is omitted and the upper bound $R_\bD$ replaced by any other condition yielding tightness. Our bounds are chosen so that
  \[
    \cP^n_\bD=\big\{P\mbox{ martingale law on }(\R^d)^{n+1}: \Delta X^n (\Delta X^n)'\in \bD, \;P\as\big\}\quad\mbox{if }d=1.
  \]
\end{remark}

\paragraph{Continuous-Time Limit.}
To compare our objects from the two formulations, we shall extend any discrete path $x\in(\R^d)^{n+1}$ to a continuous path $\widehat{x}\in \Omega$ by linear interpolation. More precisely, we define the  interpolation operator
\begin{align*}%\label{eq:interpol}
  \widehat{}\,:\quad &(\R^d)^{n+1}\to \Omega,\quad x=(x_0,\dots,x_n)\mapsto \widehat{x}=(\widehat{x}_t)_{0\leq t\leq T},\quad \mbox{where}\nonumber\\
  &\widehat{x}_t:=([nt/T] + 1 -nt/T)x_{[nt/T]} + (nt/T-[nt/T])x_{[nt/T]+1}
\end{align*}
and $[y]:=\max\{m\in\Z: m\leq y\}$ for $y\in\R$. In particular, if $X^n$ is the canonical process on $(\R^d)^{n+1}$ and $\xi$ is a random variable on $\Omega$, then
$\xi(\widehat{X^n})$ defines a random variable on $(\R^d)^{n+1}$. This allows us to define the following push-forward of $\cE^n_\bD$ to a continuous-time object,
\[
  \widehat{\cE}^n_\bD(\xi):=\cE^n_\bD(\xi(\widehat{X^n})) \quad \mbox{for}\quad\xi: \Omega\to\R
\]
being suitably integrable.

Our main result states that this sublinear expectation with discrete-time volatility uncertainty converges
to the $G$-expectation as the number $n$ of periods tends to infinity, if the domain of volatility uncertainty is scaled as
$\bD/n:=\{n^{-1}\Gamma:\, \Gamma\in\bD\}$.

\begin{theorem}\label{th:limit}
  Let $\xi: \Omega\to\R$ be a continuous function satisfying $|\xi(\omega)|\leq c (1+\|\omega\|_{\infty})^p$
  for some constants $c,p>0$. Then $\widehat{\cE}^n_{\bD/n}(\xi)\to \cE_\bD(\xi)$ as $n\to\infty$; that is,
  \begin{equation}\label{eq:limitThm}
    \sup_{P\in \cP^n_{\bD/n}} E^P[\xi(\widehat{X^n})] \to \sup_{P\in \cP_{\bD}} E^P[\xi].
  \end{equation}
\end{theorem}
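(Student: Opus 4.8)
The plan is to prove the two inequalities $\limsup_n \widehat{\cE}^n_{\bD/n}(\xi)\leq \cE_\bD(\xi)$ and $\liminf_n \widehat{\cE}^n_{\bD/n}(\xi)\geq \cE_\bD(\xi)$ separately, both via weak-convergence arguments on the path space $\Omega$. A first reduction: since $\xi$ is continuous with polynomial growth, a standard uniform-integrability estimate (using the $L^p$-bounds on increments built into $\cP^n_{\bD/n}$, together with Doob's inequality) shows that it suffices to prove the convergence for bounded continuous $\xi$ and then pass to the limit; I would state this as a lemma and defer the (routine) moment estimates. So assume from now on that $\xi$ is bounded and continuous on $\Omega$.

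For the upper bound $\limsup_n \widehat{\cE}^n_{\bD/n}(\xi)\leq \cE_\bD(\xi)$, the idea is tightness plus identification of subsequential limits. Fix $\eps>0$ and choose for each $n$ a near-optimal $P^n\in\cP^n_{\bD/n}$, i.e.\ $E^{P^n}[\xi(\widehat{X^n})]\geq \widehat{\cE}^n_{\bD/n}(\xi)-\eps$. Let $Q^n:=P^n\circ(\widehat{X^n})^{-1}$ be the law of the interpolated path on $\Omega$. The increment bounds $|\Delta X^n_k|^2\leq d^2 R_\bD/n$ give, via a Kolmogorov/Aldous-type criterion (or directly via fourth-moment bounds on $\widehat{X^n}$, using that the $\Delta X^n_k$ are martingale increments with controlled conditional covariance $\leq R_\bD/n \cdot \idmat$), that $(Q^n)_n$ is tight on $\Omega$. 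Along any weakly convergent subsequence $Q^{n_j}\to Q$, I claim $Q\in\cP_\bD$. That $B$ is a $Q$-martingale with $B_0=0$ follows from passing the discrete martingale property to the limit (the increments are bounded, so no uniform-integrability issue). The delicate point is showing $d\br B_t/dt\in\bD$ $Q\times dt$-a.e. Here I would use that under $P^n$ the process $X^n_k(X^n_k)' - \sum_{i\leq k} E^{P^n}[\Delta X^n_i(\Delta X^n_i)'|\cF^n_{i-1}]$ is a martingale, that each conditional covariance lies in $\bD/n$, and hence that for the interpolation, $\widehat{X^n}_t(\widehat{X^n}_t)' - A^n_t$ is approximately a martingale where $A^n$ is a nondecreasing (in the PSD order) process with increments in $\bD\cdot(dt)$ up to $o(1)$; passing to the limit identifies $\br B$ with a limit $A$ of the $A^n$, which is absolutely continuous with density in $\bar{\mathrm{conv}}(\bD)=\bD$ (convexity of $\bD$ is used here). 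Then $E^{Q}[\xi]\leq \cE_\bD(\xi)$, and since $E^{Q^{n_j}}[\xi]=E^{P^{n_j}}[\xi(\widehat{X^{n_j}})]\to E^Q[\xi]$ by bounded continuity, we get $\limsup_j \widehat{\cE}^{n_j}_{\bD/n_j}(\xi)\leq \cE_\bD(\xi)+\eps$; a subsequence argument removes the subsequence and $\eps\to0$ finishes it.

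For the lower bound $\liminf_n \widehat{\cE}^n_{\bD/n}(\xi)\geq \cE_\bD(\xi)$, I would fix $\eps>0$ and a near-optimal $P\in\cP_\bD$ with $E^P[\xi]\geq \cE_\bD(\xi)-\eps$, and construct discrete approximations. Under $P$, write $B_t=\int_0^t \sigma_s\,dW_s$ for some Brownian motion $W$ and a process $\sigma$ with $\sigma_s\sigma_s'\in\bD$. The natural attempt is to sample $B$ at the grid times $t^n_k=kT/n$ and let $P^n$ be the law of $(B_{t^n_0},\dots,B_{t^n_n})$; then $\widehat{X^n}\to B$ uniformly $P$-a.s., so $E^{P^n}[\xi(\widehat{X^n})]\to E^P[\xi]$. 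The obstruction — and this is the main difficulty of the whole theorem — is that this $P^n$ need not satisfy the pathwise increment bounds $d^2 r_{\bD}/n\leq |\Delta X^n_k|^2\leq d^2 R_{\bD}/n$ nor even $E^{P^n}[\Delta X^n_k(\Delta X^n_k)'|\cF^n_{k-1}]\in\bD/n$ exactly: the conditional covariance is $E^P[\int_{t^n_{k-1}}^{t^n_k}\sigma_s\sigma_s'\,ds\,|\,\cF_{t^n_{k-1}}]/1$, which lies in $\overline{\mathrm{conv}}(\bD)=\bD$ by convexity but whose realization along a path is averaged, and the increments are Gaussian-like, hence unbounded. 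I would handle this in two steps. First, a truncation/conditioning argument: the event that some increment violates the bounds has $P^n$-probability $o(1)$ by Gaussian tail estimates (for the upper bound) and can be arranged by working with a slightly shrunk domain $(1-\delta)\bD$ and then using convexity/continuity of $\cE_\bD$ in $\bD$ — alternatively, replace $B$ on each subinterval by a discrete martingale with the prescribed conditional covariance and bounded increments, e.g.\ a suitably rotated $\pm$-type increment, matching the first two conditional moments; such a process still converges weakly to $B$ (Donsker/martingale FCLT for triangular arrays, e.g.\ via the Lindeberg condition, which holds because increments are $O(n^{-1/2})$), while now lying in $\cP^n_{\bD/n}$ by construction. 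Second, one must check the conditional covariance is genuinely in $\bD/n$ and not merely in its closed convex hull — but $\bD$ is already convex and closed, so this is automatic, which is exactly why the hypothesis ``$\bD$ convex and compact'' is imposed. With $P^n\in\cP^n_{\bD/n}$ and $E^{P^n}[\xi(\widehat{X^n})]\to E^P[\xi]\geq\cE_\bD(\xi)-\eps$, we conclude $\liminf_n\widehat{\cE}^n_{\bD/n}(\xi)\geq\cE_\bD(\xi)-\eps$, and $\eps\to0$ completes the proof.

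I expect the hard part to be the lower bound, specifically the explicit construction of elements of $\cP^n_{\bD/n}$ that simultaneously (i) respect the pathwise increment bounds, (ii) have conditional covariance exactly in $\bD/n$, and (iii) converge weakly to a prescribed $P\in\cP_\bD$; reconciling the pathwise bound $|\Delta X^n_k|^2\in[d^2r_\bD/n,\,d^2R_\bD/n]$ with the martingale and covariance constraints is a genuine combinatorial/geometric construction, and the fact that $\sigma_s\sigma_s'$ may be a boundary point of $\bD$ (so there is no room to perturb) is what makes it delicate. On the upper-bound side the only subtlety is the identification $d\br B/dt\in\bD$, which again leans on convexity of $\bD$.
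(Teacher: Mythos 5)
Your upper bound is essentially the paper's argument (near-optimizers, tightness of the interpolated laws from BDG/fourth-moment bounds, identification of cluster points in $\cP_\bD$ via convexity of $\bD$; the paper makes your sketch of the identification rigorous through the separating-hyperplane characterization of $\bD$ and continuous adapted test functions, plus a uniform-integrability lemma that also plays the role of your reduction to bounded $\xi$). That half stands.

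The genuine gap is in the lower bound. You fix a near-optimal $P\in\cP_\bD$, write $B=\int\sigma\,dW$, and propose to replace each grid increment by a bounded increment ``matching the first two conditional moments,'' concluding weak convergence to $B$ from a Lindeberg-type martingale FCLT. Two problems. First and mainly: a Lindeberg/martingale FCLT identifies the limit only when the predictable quadratic variation converges to a prescribed limit; for a general $P\in\cP_\bD$ the volatility $\sigma$ need not be \emph{any} functional of the path $B$ (it may involve extra randomness), and even when it is, it is only measurable, so matching conditional moments interval by interval does not produce discrete laws whose limit is the given $P$. One must first reduce to volatilities of the form $f(t,B)$ with $f$ \emph{continuous}; this is exactly the paper's Proposition~\ref{pr:density} (the convex hull of $\cQ_\bD$ is weakly dense in $\cP_\bD$), proved via regularization, piecewise-constant discretization, a randomization/Fubini/Hahn--Banach step to strip the extra randomness, and smoothing. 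That reduction is the bulk of the work for the inequality ``$\geq$'', and your proposal has no substitute for it; afterwards the paper gets convergence of the discrete integrals to $\int f(t,W)\,dW_t$ from the Duffie--Protter stability theorem, for which continuity of $f$ is essential. Second, a minor but real point: the pathwise constraint $d^2r_\bD/n\leq|\Delta X^n_k|^2\leq d^2R_\bD/n$ together with conditional covariance exactly in $\bD/n$ is not met by a generic ``rotated $\pm$-type'' increment; e.g.\ $\sqrt{\Gamma}\,\epsilon$ with i.i.d.\ coordinate signs only gives $|\sqrt{\Gamma}\,\epsilon|^2\geq d\,r_\bD/n$, violating the lower bound when $d\geq2$ and $r_\bD>0$. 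The paper uses He's construction of i.i.d.\ $\xi_k$ with $|\xi_k|=d$ and orthonormal components, which satisfies the bounds exactly. You correctly flagged this construction and the boundary-of-$\bD$ issue as the hard part, but the proposal does not close it.
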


We shall see that all expressions in~\eqref{eq:limitThm} are well defined and finite. Moreover, we will show in Theorem~\ref{th:limitRamification} that the result also holds true for a ``strong'' formulation of volatility uncertainty.

\begin{remark}
  Theorem~\ref{th:limit} cannot be extended to the case where $\xi$ is merely in $\L^1_G$, which is defined as the completion of $C_b(\Omega;\R)$ under the norm $\|\xi\|_{L^1_G}:=\sup\{E^P|\xi|,\, P\in \cP_\bD\}$. This is because $\|\,\cdot\,\|_{L^1_G}$ ``does not see'' the discrete-time objects, as illustrated by the following example.
  Assume for simplicity that $0\notin\bD$ and let $A\subset \Omega$ be the set of paths with finite variation. Since $P(A)=0$ for any $P\in \cP_\bD$, we have $\xi:=1-\1_A=1$ in $\L^1_G$ and the right hand side of~\eqref{eq:limitThm} equals one. However, the trajectories of $\widehat{X^n}$ lie in $A$, so that $\xi(\widehat{X^n})\equiv0$ and the left hand side of~\eqref{eq:limitThm} equals zero.
\end{remark}

In view of the previous remark, we introduce a smaller space $\L^1_*$, defined as the completion of $C_b(\Omega;\R)$ under the norm
\begin{equation}\label{eq:Dnorm}
  \|\xi\|_*:= \sup_{Q\in\cQ} E^Q |\xi|,\quad \cQ:=\cP_\bD\cup \big\{P\circ(\widehat{X^n})^{-1}:\, P\in \cP^n_{\bD/n},\;n\in\N\big\}.
\end{equation}
If $\xi$ is as in Theorem~\ref{th:limit}, then $\xi\in \L^1_*$ by Lemma~\ref{le:polygrowthImpliesD} below and so the following is a generalization of Theorem~\ref{th:limit}.

\begin{corollary}\label{co:generalizedLimitThm}
  Let $\xi\in \L^1_*$. Then $\widehat{\cE}^n_{\bD/n}(\xi)\to \cE_\bD(\xi)$ as $n\to\infty$.
\end{corollary}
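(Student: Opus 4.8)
The plan is a standard density argument resting on Theorem~\ref{th:limit} together with the observation that the norm $\|\cdot\|_*$ was engineered precisely to make all the functionals in play $1$-Lipschitz.

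First I would record the Lipschitz estimate. Since each of $\cE^n_{\bD/n}$ and $\cE_\bD$ is a supremum of linear expectations, the elementary bound $|\sup_i a_i-\sup_i b_i|\le\sup_i|a_i-b_i|$ gives, for $\xi,\eta\in C_b(\Omega;\R)$,
\[
  |\widehat{\cE}^n_{\bD/n}(\xi)-\widehat{\cE}^n_{\bD/n}(\eta)|\le\sup_{P\in\cP^n_{\bD/n}}E^{P\circ(\widehat{X^n})^{-1}}|\xi-\eta|\le\|\xi-\eta\|_*,
\]
and, since $\cP_\bD\subseteq\cQ$, likewise $|\cE_\bD(\xi)-\cE_\bD(\eta)|\le\|\xi-\eta\|_*$, where $\cQ$ is the set from~\eqref{eq:Dnorm}. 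Hence all these functionals are uniformly continuous on $(C_b(\Omega;\R),\|\cdot\|_*)$ and extend uniquely to $1$-Lipschitz functionals on the completion $\L^1_*$. I would also note that $\|\cdot\|_*$ dominates $\|\cdot\|_{L^1(Q)}$ for each $Q\in\cQ$, so an element of $\L^1_*$ has a canonical image in every such $L^1(Q)$ and the symbols $\widehat{\cE}^n_{\bD/n}(\xi)$, $\cE_\bD(\xi)$ are unambiguous for $\xi\in\L^1_*$.

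Next, given $\xi\in\L^1_*$ and $\eps>0$, I would pick $\eta\in C_b(\Omega;\R)$ with $\|\xi-\eta\|_*<\eps$. Such $\eta$ satisfies the polynomial-growth hypothesis of Theorem~\ref{th:limit} (take $c=\|\eta\|_\infty$, $p=1$), so $\widehat{\cE}^n_{\bD/n}(\eta)\to\cE_\bD(\eta)$. The triangle-inequality split
\[
  |\widehat{\cE}^n_{\bD/n}(\xi)-\cE_\bD(\xi)|\le|\widehat{\cE}^n_{\bD/n}(\xi)-\widehat{\cE}^n_{\bD/n}(\eta)|+|\widehat{\cE}^n_{\bD/n}(\eta)-\cE_\bD(\eta)|+|\cE_\bD(\eta)-\cE_\bD(\xi)|,
\]
together with the Lipschitz bound on the two outer terms, yields $\limsup_{n\to\infty}|\widehat{\cE}^n_{\bD/n}(\xi)-\cE_\bD(\xi)|\le2\eps$; letting $\eps\downarrow0$ concludes.

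I do not expect any genuine obstacle: the substantive content is entirely in Theorem~\ref{th:limit}. The only thing to be careful about is the bookkeeping in the first step — checking that $\cQ$ contains exactly the measures needed so that one single estimate controls $\cE_\bD$ and all the $\widehat{\cE}^n_{\bD/n}$ simultaneously, and that passing to the abstract completion $\L^1_*$ does not make $\widehat{\cE}^n_{\bD/n}(\xi)$ or $\cE_\bD(\xi)$ ambiguous. This is, of course, the reason $\L^1_*$ was defined via the enlarged set $\cQ$ rather than via $\cP_\bD$ alone.
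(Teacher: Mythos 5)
Your proof is correct and follows essentially the same route as the paper: approximate $\xi\in\L^1_*$ by $\eta\in C_b(\Omega;\R)$ (which falls under Theorem~\ref{th:limit}), and use that both $\cE_\bD$ and all the $\widehat{\cE}^n_{\bD/n}$ are $1$-Lipschitz with respect to $\|\cdot\|_*$ because every relevant law ($\cP_\bD$ and the push-forwards of $\cP^n_{\bD/n}$) lies in $\cQ$ — which is exactly the norm-equivalence the paper's one-line proof invokes. Your explicit remarks on the well-definedness of the extended functionals on the abstract completion are a welcome bit of bookkeeping but not a different argument.
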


\begin{proof}
  This follows from Theorem~\ref{th:limit} by approximation, using that $\|\xi\|_*$ and $\sup\{E^P|\xi|:\,P\in \cP_{\bD}\}+ \sup\{E^P|\xi(\widehat{X^n})|: P\in \cP^n_{\bD/n},\,n\in\N\}$ are equivalent norms.
\end{proof}

\section{Proofs and Ramifications}\label{se:proof}

In the next two subsections, we prove separately two inequalities that jointly imply Theorem~\ref{th:limit} and a slightly stronger result, reported in Theorem~\ref{th:limitRamification}.

\subsection{First Inequality}\label{se:proofFirstIneq}

In this subsection we prove the first inequality of~\eqref{eq:limitThm}, namely that
\begin{equation}\label{eq:limitFirstInequality}
  \limsup_{n\to\infty} \sup_{P\in \cP^n_{\bD/n}} E^P[\xi(\widehat{X^n})] \leq \sup_{P\in \cP_{\bD}} E^P[\xi].
\end{equation}
The essential step in this proof is a stability result for the volatility (see Lemma~\ref{le:tightness}(ii) below); the necessary tightness follows from the compactness of $\bD$; i.e., from $R_\bD<\infty$.
We shall denote $\lambda \bD=\{\lambda \Gamma: \,\Gamma\in\bD\}$ for $\lambda\in\R$.

\begin{lemma}\label{le:discreteMomentEstimate}
  Given $p\in[1,\infty)$, there exists a universal constant $K>0$ such that for all $0\leq k\leq l\leq n$ and $P\in \cP^n_{\bD}$,
  \begin{enumerate}[topsep=3pt, partopsep=0pt, itemsep=1pt,parsep=2pt]
    \item $E^P [\sup_{k=0,\ldots,n} |X^n_k|^{2p}]\leq K (n R_\bD)^p$,
    \item $E^P |X^n_{l}-X^n_k|^4\leq K R_\bD^2 (l-k)^2$,
    \item $E^P[(X^n_l-X^n_k)(X^n_l-X^n_k)'|\cF^n_k] \in (l-k)\bD$ $P$-a.s.
  \end{enumerate}
\end{lemma}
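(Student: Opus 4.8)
The plan is to handle (iii) first, since it is purely algebraic and the subsequent estimates only use the pathwise bound $|\Delta X^n_j|^2\le d^2R_\bD$ together with the martingale property. Write $X^n_l-X^n_k=\sum_{j=k+1}^{l}\Delta X^n_j$ and expand the outer product. For $i\neq j$ the term $\Delta X^n_i(\Delta X^n_j)'$ has vanishing $\cF^n_k$-conditional expectation: say $i<j$; conditioning on $\cF^n_{j-1}$ and pulling out the $\cF^n_{j-1}$-measurable factor $\Delta X^n_i$ leaves $E^P[(\Delta X^n_j)'\mid\cF^n_{j-1}]=0$ because $X^n$ is a $P$-martingale, and the tower property finishes it (the case $i>j$ is symmetric by transposition). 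Hence
\[
  E^P\big[(X^n_l-X^n_k)(X^n_l-X^n_k)'\mid\cF^n_k\big]=\sum_{j=k+1}^{l}E^P\big[\Delta X^n_j(\Delta X^n_j)'\mid\cF^n_k\big],
\]
and by the tower property each summand equals $E^P\big[E^P[\Delta X^n_j(\Delta X^n_j)'\mid\cF^n_{j-1}]\mid\cF^n_k\big]$. The inner conditional expectation lies in $\bD$ $P$-a.s.\ by definition of $\cP^n_\bD$; since $\bD$ is closed and convex it equals the intersection of the half-spaces $\{A\in\S^d:\tr(Au)\le h_\bD(u)\}$ over $u$ in a countable dense subset of $\S^d$ (with $h_\bD$ the support function of $\bD$), and each such linear inequality is preserved under conditional expectation, so the outer conditional expectation is again $\bD$-valued. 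Thus each of the $l-k$ summands lies in $\bD$, and convexity of $\bD$ gives that their sum lies in $(l-k)\bD$, which is (iii).

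For (i) and (ii) I would invoke the Burkholder--Davis--Gundy inequality for the $\R^d$-valued martingale $X^n$ (obtained from the scalar inequality applied coordinatewise and summed, at the cost of a constant depending only on the fixed dimension $d$): for each $q\in[1,\infty)$ there is $C_{q,d}$ with
\[
  E^P\Big[\sup_{0\le k\le n}|X^n_k|^{2q}\Big]\le C_{q,d}\,E^P\Big[\Big(\sum_{j=1}^{n}|\Delta X^n_j|^2\Big)^{q}\Big].
\]
By the definition of $\cP^n_\bD$ the bracket on the right is bounded deterministically by $(nd^2R_\bD)^q$, which yields (i) with $q=p$ and $K=C_{p,d}d^{2p}$. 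For (ii) I would run the same estimate with $q=2$ for the shifted martingale $M_j:=X^n_{k+j}-X^n_k$, $j=0,\dots,l-k$, whose quadratic variation is bounded by $(l-k)d^2R_\bD$; then $E^P|X^n_l-X^n_k|^4\le E^P[\sup_j|M_j|^4]\le C_{2,d}\,d^4R_\bD^2(l-k)^2$, giving (ii). Taking $K$ to be the largest of the three constants proves the lemma; all constants depend only on $p$ (and the fixed $d$), not on $n$, $\bD$, $k$, $l$ or $P$. If one prefers to avoid quoting the vector-valued BDG inequality, (i) reduces via Doob's $L^{2p}$ maximal inequality to a bound on $E^P|X^n_n|^{2p}$, and the fourth moment in (ii) follows from the elementary recursion obtained by expanding $|M_j|^4=(|M_{j-1}|^2+2\,M_{j-1}\!\cdot\!\Delta M_j+|\Delta M_j|^2)^2$, taking $\cF^n_{k+j-1}$-conditional expectations, and iterating together with $E^P|M_{j-1}|^2=\sum_{i=1}^{j-1}E^P|\Delta M_i|^2\le(j-1)d^2R_\bD$.

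I do not anticipate a genuine obstacle here: the substance is the martingale orthogonality used in (iii), the convexity and closedness of $\bD$ ensuring that conditional expectations of $\bD$-valued random variables stay in $\bD$, and the pathwise increment bound feeding a standard maximal/moment inequality for (i)--(ii). The only points that need a little care are the countable-family argument making the conditional-expectation claim in (iii) rigorous and the bookkeeping of constants in (i)--(ii) so that they remain independent of $n$, $k$, $l$, $P$ and $\bD$; both are routine.
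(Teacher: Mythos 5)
Your proposal is correct and follows essentially the same route as the paper: BDG (via the pathwise bound $|\Delta X^n_j|^2\le d^2R_\bD$ on the quadratic variation) for (i) and (ii), and martingale orthogonality plus convexity of $\bD$ for (iii). The only differences are cosmetic: you bound the scalar bracket $\sum_j|\Delta X^n_j|^2$ where the paper bounds the operator norm of the matrix bracket $[X]$, and you spell out (via the support-function/half-space characterization) the step that conditional expectations of $\bD$-valued variables remain in $\bD$, which the paper simply attributes to convexity.
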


\begin{proof}
  We set $X:=X^n$ to ease the notation.

  (i)~Let $p\in [1,\infty)$. By the Burkholder-Davis-Gundy (BDG) inequalities there exists a universal constant $C=C(p,d)$ such that
  \[
    E^P \bigg[\sup_{k=0,\ldots,n} |X^n_k|^{2p}\bigg] \leq C E^P \|[X]_n\|^p.
  \]
  In view of $P\in \cP^n_{\bD}$, we have
  $\|[X]_n\|=\|\sum_{i=1}^n \Delta X_i (\Delta X_i)'\| \leq nd^2R_\bD$ $P$-a.s.

  (ii)~The BDG inequalities yield a universal constant $C$ such that
  \[
    E^P |X_l-X_k|^4 \leq C E^P \|[X]_l-[X]_k\|^2.
  \]
  Similarly as in (i), $P\in \cP^n_\bD$ implies that $\|[X]_l-[X]_k\|\leq (l-k)d^2R_\bD$ $P$-a.s.

  (iii)~The orthogonality of the martingale increments yields that
  \[
   E^P[(X_l-X_k)(X_l-X_k)'|\cF^n_k] = \sum_{i=k+1}^l E^P[\Delta X_i(\Delta X_i)'|\cF^n_k].
  \]
  Since $E^P[\Delta X_i(\Delta X_i)'|\cF^n_{i-1}]\in \bD$ $P$-a.s.\ and since $\bD$ is convex,
  \[
    E^P[\Delta X_i(\Delta X_i)'|\cF^n_k]=E^P\big[E^P[\Delta X_i(\Delta X_i)'|\cF^n_{i-1}]\big|\cF^n_k\big]
  \]
  again takes values in $\bD$. It remains to observe that if $\Gamma_1,\dots,\Gamma_m\in \bD$, then $\Gamma_1+\dots+\Gamma_m\in m \bD$ by convexity.
\end{proof}

The following lemma shows in particular that all expressions in Theorem~\ref{th:limit} are well defined and finite.

\begin{lemma}\label{le:finiteness}
  Let $\xi: \Omega\to\R$ be as in Theorem~\ref{th:limit}. Then $\|\xi\|_*<\infty$; that is,
  \begin{equation}\label{eq:uniformInt}
    \sup_{n\in\N} \sup_{P\in \cP^n_{\bD/n}} E^P|\xi(\widehat{X^n})|<\infty\quad\mbox{and}\quad\sup_{P\in \cP_{\bD}} E^P|\xi|<\infty.
  \end{equation}
\end{lemma}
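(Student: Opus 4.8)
The plan is to prove the two estimates in~\eqref{eq:uniformInt} by reducing both to a uniform bound on $E[\|\omega\|_\infty^{2q}]$ for a single integer exponent $q\geq p/2$, using the polynomial growth hypothesis $|\xi(\omega)|\leq c(1+\|\omega\|_\infty)^p$ together with Jensen's inequality $(1+a)^p\leq 2^p(1+a^{2q})$. Thus it suffices to show that $\sup_n\sup_{P\in\cP^n_{\bD/n}} E^P[\|\widehat{X^n}\|_\infty^{2q}]<\infty$ and $\sup_{P\in\cP_\bD}E^P[\|\omega\|_\infty^{2q}]<\infty$.

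For the discrete part, the key observation is that $\|\widehat{X^n}\|_\infty=\sup_{0\leq t\leq T}|\widehat{X^n}_t|=\max_{k=0,\dots,n}|X^n_k|$, since the linear interpolant of a finite set of points attains its sup-norm at one of the nodes. Now I would invoke Lemma~\ref{le:discreteMomentEstimate}(i) with the scaled domain $\bD/n$ in place of $\bD$: its spectrum radius is $R_{\bD/n}=R_\bD/n$, so the lemma gives $E^P[\max_k|X^n_k|^{2q}]\leq K(nR_{\bD/n})^q=K R_\bD^{\,q}$, a bound that is uniform in both $n$ and $P\in\cP^n_{\bD/n}$. This immediately yields the first half of~\eqref{eq:uniformInt}.

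For the continuous part, under any $P\in\cP_\bD$ the canonical process $B$ is a continuous martingale with $\br B_T\in T\bD$ in the sense that $\|\br B_T\|\leq TR_\bD$ (from $d\br B_t/dt\in\bD$ and $R_\bD=\sup_{\Gamma\in\bD}\|\Gamma\|<\infty$). By the Burkholder--Davis--Gundy inequalities, $E^P[\|B\|_\infty^{2q}]=E^P[\sup_{t\leq T}|B_t|^{2q}]\leq C E^P[\|\br B_T\|^q]\leq C(TR_\bD)^q$, again uniformly in $P$. Combining the two pieces and applying the growth bound on $\xi$ gives $\|\xi\|_*<\infty$.

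The only real subtlety is the continuous-time BDG step in the multivariate case: one must be careful that $\br B$ is the matrix-valued quadratic-covariation process and that $\|\br B_T\|\leq TR_\bD$ follows from the a.e.\ constraint $d\br B_t/dt\in\bD$ by integrating and using convexity/compactness of $\bD$ (the same convexity argument already used in Lemma~\ref{le:discreteMomentEstimate}(iii)); alternatively one reduces to the scalar BDG inequality by bounding $|B_t|^2\leq d\sum_{i=1}^d|B^i_t|^2$ and controlling each coordinate's bracket by $\br B^i_T\leq TR_\bD$. Beyond this, everything is routine, and no step presents a genuine obstacle.
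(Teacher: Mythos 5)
Your proposal is correct and follows essentially the same route as the paper's proof: reduce via the polynomial growth of $\xi$ to uniform moment bounds, bound $\sup_t|\widehat{X^n}_t|$ by $\max_k|X^n_k|$ and apply Lemma~\ref{le:discreteMomentEstimate}(i) together with $R_{\bD/n}=R_\bD/n$ for the discrete part, and use the BDG inequalities with the boundedness of $\bD$ (i.e.\ $\|\br{B}_T\|\leq TR_\bD$) for the continuous part. The only difference is your explicit bookkeeping with an integer exponent $2q\geq p$, which the paper glosses over but which changes nothing of substance.
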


\begin{proof}
  Let $n\in\N$ and $P\in \cP^n_{\bD/n}$. By the assumption on $\xi$, there exist constants $c,p>0$ such that
  \begin{align*}
    E^P|\xi(\widehat{X^n})| \leq c+c E^P \bigg[\sup_{0\leq t\leq T} |\widehat{X^n_t}|^p\bigg]\leq c+cE^P \bigg[\sup_{k=0,\dots,n} |X^n_k|^p\bigg].
  \end{align*}
  Hence Lemma~\ref{le:discreteMomentEstimate}(i) and the observation that $R_{\bD/n}=R_\bD/n$ yield that $E^P|\xi(\widehat{X^n})|\leq K R_\bD^{p/2}$ and the first claim follows.
  The second claim similarly follows from the estimate that
  $E^P[\sup_{0\leq t\leq T} |B_t|^p]\leq C_p$ for all $P\in\cP_\bD$, which is obtained from the BDG inequalities by using that
  $\bD$ is bounded.
\end{proof}

We can now prove the key result of this subsection.

\begin{lemma}\label{le:tightness}
  For each $n\in\N$, let $\{M^n=(M^n_k)_{k=0}^n,\tilde{P}^n\}$ be a martingale with law $P^n\in \cP^n_{\bD/n}$ on $(\R^d)^{n+1}$ and let
  $Q^n$ be the law of $\widehat{M^n}$ on $\Omega$. Then
  \begin{enumerate}[topsep=3pt, partopsep=0pt, itemsep=1pt,parsep=2pt]
    \item the sequence $(Q^n)$ is tight on $\Omega$,
    \item any cluster point of $(Q^n)$ is an element of $\cP_{\bD}$.
  \end{enumerate}
\end{lemma}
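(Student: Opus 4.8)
The plan is to prove (i) via a standard Kolmogorov–Chentsov tightness criterion applied to the interpolated processes, and (ii) by passing to the limit in the defining martingale and quadratic-variation properties along a convergent subsequence. For (i), I would first observe that by Lemma~\ref{le:discreteMomentEstimate}(ii), applied with $\bD$ replaced by $\bD/n$ (so $R_{\bD/n}=R_\bD/n$), one has $E^{P^n}|X^n_l - X^n_k|^4 \leq K R_\bD^2 (l-k)^2/n^2$ for all $0\leq k\leq l\leq n$. Translating the discrete indices to times $s=Tk/n$, $t=Tl/n$ this reads $E^{P^n}|X^n_{nt/T}-X^n_{ns/T}|^4 \leq K' |t-s|^2$ with a constant independent of $n$. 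The linear interpolation $\widehat{M^n}$ satisfies $|\widehat{M^n}_t - \widehat{M^n}_s| \leq 2\max_{k:\,Tk/n\in[s-T/n,\,t+T/n]} |M^n_k - M^n_{\lceil ns/T\rceil}|$ type bounds, and a routine estimate (treating separately the cases $|t-s|\geq T/n$ and $|t-s|<T/n$, using that on a single linear piece the increment of $\widehat{M^n}$ is controlled by the adjacent increments of $M^n$, which by Lemma~\ref{le:discreteMomentEstimate}(ii) have fourth moment $O(n^{-2})$) upgrades this to $E^{Q^n}|\omega_t-\omega_s|^4 \leq C|t-s|^2$ for all $s,t\in[0,T]$, uniformly in $n$. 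Since also $\omega_0=0$ $Q^n$-a.s., Kolmogorov's tightness criterion on $C([0,T];\R^d)$ gives tightness of $(Q^n)$.

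For (ii), let $Q$ be a cluster point, say $Q^{n_j}\to Q$ weakly. I first check that $B$ is a $Q$-martingale with $B_0=0$. The latter is immediate since $\omega\mapsto\omega_0$ is continuous and bounded on the (now relatively compact) support. For the martingale property it suffices to show $E^Q[(B_t - B_s)\, h(B_{u_1},\dots,B_{u_m})]=0$ for $0\leq u_1<\dots<u_m\leq s<t\leq T$ and bounded continuous $h$. Each such functional is continuous on $\Omega$; the only issue is uniform integrability, which is supplied by the uniform fourth-moment bound from (i) (in particular $\sup_j E^{Q^{n_j}}|B_t-B_s|^2<\infty$). On the discrete side, $M^n$ is a $\tilde P^n$-martingale, and the interpolation error between $\widehat{M^n}$ evaluated at $u_i,s,t$ and the discrete skeleton values tends to $0$ in $L^2$ by Lemma~\ref{le:discreteMomentEstimate}(ii); hence $E^{Q^{n_j}}[(B_t-B_s)h(\dots)] \to 0$ and the martingale property passes to $Q$.

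It remains to show $d\br{B}_t/dt \in \bD$, $Q\times dt$-a.e. Fix $0\leq s<t\leq T$ and a vector $\theta\in\R^d$. By Lemma~\ref{le:discreteMomentEstimate}(iii) (with $\bD/n$), $E^{P^n}[(X^n_l-X^n_k)(X^n_l-X^n_k)'\mid\cF^n_k] \in \frac{l-k}{n}\bD$, so that $N^n_k := M^n_k (M^n_k)' - \frac{k}{n}A^n$ type expressions... more precisely, the process $M^n_k(M^n_k)' - \sum_{i\le k} E^{P^n}[\Delta X^n_i(\Delta X^n_i)'\mid\cF^n_{i-1}]$ is a matrix-valued $\tilde P^n$-martingale, and its compensator has increments lying in $\frac1n\bD$. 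Consequently, for any $A\in\cF^n_k$, the matrix $E^{P^n}[(X^n_l-X^n_k)(X^n_l-X^n_k)'\1_A] - \frac{l-k}{n}\,(\text{something in }\bD)\cdot P^n(A) = 0$; combined with convexity and closedness of $\bD$ this yields, in the limit, that for any bounded continuous $\cF_s$-measurable $g\geq 0$,
\[
  E^Q\big[\big((B_t-B_s)(B_t-B_s)' - (t-s)\,\Gamma\big)\,g\big] = 0
\]
for some $\Gamma\in\bD$ (depending a priori on $s,t,g$); taking $g\to \1_F$ for $F\in\cF_s$ and using that $B_t-B_s$ is $Q$-square-integrable (again from (i)), we get $E^Q[(B_t-B_s)(B_t-B_s)'\mid\cF_s]\in(t-s)\bD$ $Q$-a.s. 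By the argument in the proof of Lemma~\ref{le:discreteMomentEstimate}(iii), this ``conditional covariance in $(t-s)\bD$'' property for all $s<t$ forces $d\br{B}_t/dt\in\bD$ $Q\times dt$-a.e.: indeed it shows $\br{B}_t - \br{B}_s \in (t-s)\bD$ for $s<t$ (taking expectations of the conditional relation and using that $\br{B}_t - E^Q[(B_t-B_s)(B_t-B_s)'\mid\cF_s] - \br{B}_s$ is the increment of a martingale with zero expectation, hence... ), and then $\bD$-membership of the density follows by Lebesgue differentiation along a countable dense set of times.

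The main obstacle I anticipate is item (ii), specifically the rigorous passage to the limit for the quadratic-variation constraint: one must transfer an a.s.\ conditional statement about the discrete compensator into an a.s.\ conditional statement under the weak limit $Q$, which requires (a) care with conditioning under weak convergence — handled by testing against bounded continuous $\cF_s$-measurable functionals and only afterwards upgrading to indicators of $\cF_s$-sets via a monotone-class/density argument — and (b) the uniform integrability needed to pass the second moments to the limit, which is exactly what the uniform fourth-moment bound from part (i) provides. By contrast, the tightness in (i) is a routine Kolmogorov–Chentsov estimate once the fourth-moment bound of Lemma~\ref{le:discreteMomentEstimate}(ii) is in hand, with the only mild subtlety being the control of the interpolation on sub-mesh time scales $|t-s|<T/n$.
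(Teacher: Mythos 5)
Your part (i) and the overall plan for (ii) coincide with the paper's own proof (fourth-moment bound from Lemma~\ref{le:discreteMomentEstimate}(ii) plus the moment tightness criterion; passing the martingale property and the conditional covariance constraint to the weak limit using the uniform integrability from Lemma~\ref{le:discreteMomentEstimate}(i)). However, there is a genuine gap at the end of (ii). From ``$E^Q[(B_t-B_s)(B_t-B_s)'\,|\,\cF_s]\in(t-s)\bD$ for all $s<t$'' you assert the pathwise relation $\br{B}_t-\br{B}_s\in(t-s)\bD$ and then invoke Lebesgue differentiation. As written this is a non sequitur: for a fixed pair $s<t$ the constraint concerns only the $\cF_s$-conditional expectation of $\br{B}_t-\br{B}_s$ (the quantity $\br{B}_t-\br{B}_s-E^Q[(B_t-B_s)(B_t-B_s)'|\cF_s]$ merely has zero conditional mean), and it says nothing about the increment pathwise; extracting an a.e.\ statement about the density $d\br{B}_t/dt$ from the whole family of conditional constraints is precisely the crux of the lemma, and your sentence trails off (``hence\ldots'') at exactly that point. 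The paper supplies the missing step by weighting with continuous \emph{adapted} test functions $H:[0,T]\times\Omega\to[0,1]$ and a linear functional $\ell\in(\S^d)^*$, obtaining $E^Q[\int_0^T H(t,B)\,\ell(d\br{B}_t)]\le E^Q[\int_0^T H(t,B)\,C^\ell_\bD\,dt]$, extending to measurable $H$, and concluding $\ell(d\br{B}_t/dt)\le C^\ell_\bD$ $Q\times dt$-a.e.\ via the separating-hyperplane characterization of $\bD$; equivalently, one may note that $\ell(\br{B}_t)-C^\ell_\bD\,t$ is a continuous supermartingale of finite variation, hence nonincreasing, which gives the same conclusion. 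Some argument of this type must be added.

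A second, more technical point you gloss over is the alignment of filtrations and grid times when transferring the discrete constraint to the limit: an $\cF_s$-measurable functional of $\widehat{M^n}$ is in general \emph{not} measurable with respect to the discrete $\sigma$-field at the last grid point before $s$ (the interpolation on $[kT/n,s]$ already uses $M^n_{k+1}$), and increments between non-grid times $s,t$ are not exactly covered by Lemma~\ref{le:discreteMomentEstimate}(iii). The paper handles this by conditioning only on the path up to time $s-\eps$ and by allowing an arbitrary neighborhood $\widetilde{\bD}$ of $\bD$ at finite $n$, sending first $n\to\infty$, then shrinking the neighborhood, then $\eps\to0$ (using continuity of the test function and dominated convergence). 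Your test functions $g$ should likewise depend on the path only up to $s-\eps$, or the $O(1/n)$ misalignment errors must be estimated explicitly; also, your ``$\Gamma$ depending on $s,t,g$'' requires extracting a convergent subsequence in the compact set $\bD$ (or arguing with scalar inequalities as the paper does). These are fixable details, but as stated the discrete identity preceding your display is not available for all $\cF_s$-measurable $g$.
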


\begin{proof}
  (i)~Let $0\leq s\leq t\leq T$. As $R_{\bD/n}=R_\bD/n$, Lemma~\ref{le:discreteMomentEstimate}(ii) implies that
  \[%begin{equation}\label{eq:fourthMoment}
    E^{Q^n} |B_t-B_s|^4 = E^{\tilde{P}^n} |\widehat{M^n_t}-\widehat{M^n_s}|^4\leq C |t-s|^2
  \]%end{equation}
  for a constant $C>0$. Hence $(Q^n)$ is tight by the moment criterion.

  (ii)~Let $Q$ be a cluster point, then $B$ is a $Q$-martingale as a consequence of the uniform integrability implied by Lemma~\ref{le:discreteMomentEstimate}(i) and it remains to show that
  $d\br{B}_t/dt \in\bD$ holds $Q\times dt$-a.e.
  It will be useful to characterize $\bD$ by scalar inequalities: given $\Gamma\in\S^d$, the separating hyperplane theorem implies that
  \begin{equation}\label{eq:separatingHyperplane}
   \Gamma\in \bD \q\mbox{if and only if}\q \ell(\Gamma)\leq C^\ell_{\bD}:=\sup_{A\in \bD} \ell(A)\quad \mbox{for all}\quad \ell \in (\S^d)^*,
  \end{equation}
  where $(\S^d)^*$ is the set of all linear functionals $\ell:\S^d\to \R$.

  Let $H: [0,T]\times \Omega\to [0,1]$ be a continuous and adapted function
  and let $\ell \in (\S^d)^*$. We fix $0\leq s<t\leq T$ and denote $\Delta_{s,t}Y:=Y_t-Y_s$ for a process $Y=(Y_u)_{0\leq u\leq T}$.
  Let $\eps>0$ and let $\tilde{\bD}$ be any neighborhood of $\bD$, then for $n$ sufficiently large,
  \[
    E^{\tilde{P}^n}\Big[(\Delta_{s,t}\widehat{M^n})(\Delta_{s,t}\widehat{M^n})'\Big|\sigma\big(\widehat{M^n_u},0\leq u \leq s-\eps\big)\Big]\in (t-s)\widetilde{\bD}\quad \tilde{P}^n\as
  \]
  as a consequence of Lemma~\ref{le:discreteMomentEstimate}(iii).
  Since $\tilde{\bD}$ was arbitrary, it follows by~\eqref{eq:separatingHyperplane} that
  \begin{align*}
    &\limsup_{n\to\infty}  E^{Q^n} \big[H(s-\eps,B)\,\big\{\ell\big((\Delta_{s,t}B)(\Delta_{s,t}B)'\big)-C^\ell_\bD (t-s)\big\}\big] \\
      &\;= \limsup_{n\to\infty} E^{\tilde{P}^n} \big[H(s-\eps,\widehat{M^n})\,\big\{\ell\big((\Delta_{s,t}\widehat{M^n})(\Delta_{s,t}\widehat{M^n})'\big)-C^\ell_\bD(t-s)\big\}\big] \leq 0.
  \end{align*}
  Using~\eqref{eq:uniformInt} with $\xi(\omega)=\|\omega\|^2_\infty$, we may pass to the limit and conclude that
  \begin{equation}\label{eq:proofVolatilityStab}
    E^Q \big[H(s-\eps,B)\,\ell\big((\Delta_{s,t}B)(\Delta_{s,t}B)'\big)\big]
    \leq E^Q \big[H(s-\eps,B)\,C^\ell_\bD (t-s)\big].
  \end{equation}
  Since $H(s-\eps,B)$ is $\cF_s$-measurable and
  \[
    E^Q [(\Delta_{s,t}B)(\Delta_{s,t}B)'|\cF_s]
    =E^Q [B_tB_t' - B_sB_s'|\cF_s]=E^{Q}[\br{B}_t-\br{B}_s |\cF_s]
  \]
  as $B$ is a square-integrable $Q$-martingale,~\eqref{eq:proofVolatilityStab} is equivalent to
  \[
    E^Q \big[H(s-\eps,B)\,\ell\big(\br{B}_t-\br{B}_s\big)\big]
    \leq E^Q \big[H(s-\eps,B)\,C^\ell_\bD (t-s)\big].
  \]
  Using the continuity of $H$ and dominated convergence as $\eps\to0$, we obtain
  \[
    E^Q \big[H(s,B)\,\ell\big(\br{B}_t-\br{B}_s\big)\big]
    \leq E^Q \big[H(s,B)\,C^\ell_\bD (t-s)\big]
  \]
  and then it follows that
  \[
    E^Q\bigg[\int_0^T H(t,B)\,\ell(d\br{B}_t)\bigg] \leq E^Q\bigg[\int_0^T H(t,B)C^\ell_\bD\,dt\bigg].
  \]
  By an approximation argument, this inequality extends to functions $H$ which are measurable instead of continuous.
  It follows that $\ell(d\br{B}_t/dt) \leq C^\ell_\bD$ holds $Q\times dt$-a.e., and since $\ell\in (\S^d)^*$ was arbitrary,~\eqref{eq:separatingHyperplane} shows that $d\br{B}_t/dt\in\bD$ holds $Q\times dt$-a.e.
\end{proof}

We can now deduce the first inequality of Theorem~\ref{th:limit} as follows.

\begin{proof}[Proof of~\eqref{eq:limitFirstInequality}]
  Let $\xi$ be as in Theorem~\ref{th:limit} and let $\eps>0$. For each $n\in\N$ there exists an $\eps$-optimizer $P^n\in \cP^n_{\bD/n}$; i.e., if
  $Q^n$ denotes the law of $\widehat{X^n}$ on $\Omega$ under $P_n$, then
  \[
    E^{Q^n}[\xi] = E^{P^n}[\xi(\widehat{X^n})]\geq \sup_{P\in \cP^n_{\bD/n}} E^P[\xi(\widehat{X^n})] - \eps.
  \]
  By Lemma~\ref{le:tightness}, the sequence $(Q^n)$ is tight and any cluster point belongs to $\cP_\bD$. Since $\xi$
  is continuous and~\eqref{eq:uniformInt} implies $\sup_n E^{Q_n}|\xi|<\infty$, tightness yields that $\limsup_n E^{Q^n}[\xi]\leq \sup_{P\in \cP_{\bD}} E^P[\xi]$. Therefore,
  \[
    \limsup_{n\to\infty} \sup_{P\in \cP^n_{\bD/n}} E^P[\xi(\widehat{X^n})] \leq \sup_{P\in \cP_{\bD}} E^P[\xi]+\eps.
  \]
  Since $\eps>0$ was arbitrary, it follows that~\eqref{eq:limitFirstInequality} holds.
\end{proof}

Finally, we also prove the statement preceding Corollary~\ref{co:generalizedLimitThm}.

\begin{lemma}\label{le:polygrowthImpliesD}
  Let $\xi: \Omega\to\R$ be as in Theorem~\ref{th:limit}. Then $\xi\in\L^1_*$.
\end{lemma}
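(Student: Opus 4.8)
The plan is to represent $\xi$ as a $\|\cdot\|_*$-limit of bounded continuous functions, using truncation. For $m\in\N$, set $\xi_m:=(\xi\wedge m)\vee(-m)$; this is a composition of continuous maps, hence continuous, and $|\xi_m|\le m$, so $\xi_m\in C_b(\Omega;\R)$. Since $\L^1_*$ is by definition the $\|\cdot\|_*$-completion of $C_b(\Omega;\R)$, it suffices to show that $\|\xi-\xi_m\|_*\to 0$ as $m\to\infty$; the Cauchy sequence $(\xi_m)$ then defines an element of $\L^1_*$, which is identified with $\xi$ since $\xi_m\to\xi$ pointwise.

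The key input is a uniform \emph{second}-moment bound over the family $\cQ$. Because $|\xi(\omega)|\le c(1+\|\omega\|_\infty)^p$, the function $|\xi|^2$ is continuous and satisfies $|\xi(\omega)|^2\le c^2(1+\|\omega\|_\infty)^{2p}$, so it is again of the type considered in Theorem~\ref{th:limit}. Applying Lemma~\ref{le:finiteness} to $|\xi|^2$ in place of $\xi$ therefore gives
\[
  C:=\big\||\xi|^2\big\|_*=\sup_{Q\in\cQ}E^Q\big[|\xi|^2\big]<\infty.
\]
(Equivalently, one re-runs the estimate in the proof of Lemma~\ref{le:finiteness}, invoking Lemma~\ref{le:discreteMomentEstimate}(i) with exponent $2p$ in place of $p$ together with $R_{\bD/n}=R_\bD/n$ and the corresponding BDG bound for $\cP_\bD$.)

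With this in hand, the conclusion follows from the standard uniform-integrability estimate. For any $Q\in\cQ$ and any $m>0$ one has $|\xi-\xi_m|=(|\xi|-m)^+\le |\xi|\,\1_{\{|\xi|>m\}}$, and on the event $\{|\xi|>m\}$ we have $|\xi|\le |\xi|^2/m$; hence
\[
  E^Q|\xi-\xi_m|\le \frac1m\,E^Q\big[|\xi|^2\,\1_{\{|\xi|>m\}}\big]\le \frac Cm.
\]
Taking the supremum over $Q\in\cQ$ yields $\|\xi-\xi_m\|_*\le C/m\to 0$, as required.

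I do not anticipate a genuine obstacle: the argument is a routine truncation plus uniform integrability. The only point needing care is that the moment control must be uniform over all of $\cQ$ (i.e., simultaneously over $\cP_\bD$ and over all the discrete-time push-forwards), and this is exactly what Lemma~\ref{le:finiteness}, applied to $|\xi|^2$, delivers.
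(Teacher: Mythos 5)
Your proof is correct, and it takes a genuinely more self-contained route than the paper. Both arguments truncate, setting $\xi_m:=(\xi\wedge m)\vee(-m)\in C_b(\Omega;\R)$, and reduce the claim to $\|\xi-\xi_m\|_*\to0$; but the paper establishes this convergence by invoking tightness of the family $\cQ$ (from the proof of Lemma~\ref{le:tightness}) together with the finiteness $\|\xi\|_*<\infty$ of Lemma~\ref{le:finiteness}, and then arguing as in the proof of Theorem~12 of \cite{DenisHuPeng.2010}, i.e.\ a capacity/weak-compactness argument for upper expectations along decreasing sequences. You instead observe that $|\xi|^2$ is again continuous with polynomial growth, so Lemma~\ref{le:finiteness} applied to $|\xi|^2$ gives the uniform second-moment bound $C:=\sup_{Q\in\cQ}E^Q[|\xi|^2]<\infty$, and then the elementary Chebyshev-type estimate $|\xi-\xi_m|\le|\xi|\1_{\{|\xi|>m\}}\le|\xi|^2/m$ yields $\|\xi-\xi_m\|_*\le C/m$ uniformly over $\cQ$. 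This avoids both the tightness of $\cQ$ and the external reference, exploiting that the moment estimates of Lemma~\ref{le:discreteMomentEstimate}(i) hold for arbitrary exponents; the paper's route is the one that generalizes to settings where only $\|\xi\|_*$-type control (rather than higher moments) is available, but for the statement at hand your argument is simpler and complete. The only point to state explicitly (which you do) is the standard identification of $\xi$ with the $\|\cdot\|_*$-limit of the Cauchy sequence $(\xi_m)$ in the abstract completion $\L^1_*$, legitimate since $\xi_m\to\xi$ pointwise and $\|\xi\|_*<\infty$.
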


\begin{proof}
  We show that $\xi^m:=(\xi\wedge m) \vee m$ converges to $\xi$ in the norm $\|\cdot\|_*$ as $m\to\infty$, or equivalently, that the upper expectation $\sup \{E^Q[\,\cdot\,]:\, Q\in \cQ\}$ is continuous along the decreasing sequence $|\xi-\xi^m|$, where $\cQ$ is as in~\eqref{eq:Dnorm}.
  Indeed, $\cQ$ is tight by (the proof of) Lemma~\ref{le:tightness}. Using that $\|\xi\|_*<\infty$ by Lemma~\ref{le:finiteness}, we can then argue as in the proof of~\cite[Theorem~12]{DenisHuPeng.2010} to obtain the claim.
\end{proof}

\subsection{Second Inequality}\label{se:proofSecondIneq}

The main purpose of this subsection is to show the second inequality ``$\geq$'' of~\eqref{eq:limitThm}. Our proof will yield a more precise version of Theorem~\ref{th:limit}. Namely, we will include ``strong'' formulations of volatility uncertainty both in discrete and in continuous time; i.e., consider laws generated by integrals with respect to a fixed random walk (resp.\ Brownian motion). In the financial interpretation, this means that the uncertainty can be generated by \emph{complete} market models.

\paragraph{Strong Formulation in Continuous Time.}
Here we shall consider \emph{Brownian} martingales: with $P_0$ denoting the Wiener measure, we define
\[
  \cQ_\bD=\bigg\{P_0\circ \Big(\int f(t,B)\,dB_t\Big)^{-1}:\; f\in C\big([0,T]\times \Omega;\sqrt{\bD}\big) \mbox{ adapted} \bigg\},
\]
where $\sqrt{\bD}=\{\sqrt{\Gamma}:\,\Gamma\in\bD\}$. (For $\Gamma\in\S^d_+$, $\sqrt{\Gamma}$ denotes the unique square-root in $\S^d_+$.)
We note that $\cQ_\bD$ is a (typically strict) subset of $\cP_\bD$. The elements of $\cQ_\bD$ with nondegenerate $f$ have the predictable representation property; i.e., they correspond to a complete market in the terminology of mathematical finance. We have the following density result; the proof is deferred to the end of the section.

\begin{proposition}\label{pr:density}
  The convex hull of $\cQ_D$ is a weakly dense subset of $\cP_\bD$.
\end{proposition}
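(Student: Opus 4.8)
The plan is to show that any $P\in\cP_\bD$ can be approximated weakly by convex combinations of elements of $\cQ_\bD$. Since $\cP_\bD$ is a convex, weakly compact set of measures (compactness follows from the tightness and stability arguments already used in Lemma~\ref{le:tightness}, applied with $\bD$ in place of $\bD/n$) and $\cQ_\bD\subseteq\cP_\bD$, it suffices by the Hahn--Banach separation theorem to show that $\sup_{P\in\cQ_\bD}E^P[\xi]=\sup_{P\in\cP_\bD}E^P[\xi]=\cE_\bD(\xi)$ for every $\xi\in C_b(\Omega;\R)$; once this is established for bounded continuous test functions, the closed convex hull of $\cQ_\bD$ cannot be a proper subset of $\cP_\bD$. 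So the real content is the identity $\sup_{P\in\cQ_\bD}E^P[\xi]=\cE_\bD(\xi)$ for bounded continuous $\xi$.

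To prove that identity, I would invoke the dynamic-programming/PDE characterization of $\cE_\bD=\cE^G$ on its domain (which contains $C_b(\Omega;\R)$, as recalled in the excerpt via~\cite{DenisHuPeng.2010}): $\cE_\bD(\xi)$ is the value of a stochastic control problem where one controls the volatility within $\bD$, and by the theory of such problems (e.g.\ the results on $2$BSDEs / $G$-expectations, or directly the representation in~\cite{DenisHuPeng.2010}) the supremum over all martingale laws in $\cP_\bD$ equals the supremum over the subclass of \emph{strong} formulations, i.e.\ laws of $\int_0^\cdot\sigma_t\,dB_t$ under the Wiener measure with $\sigma_t\sigma_t'\in\bD$ progressively measurable. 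The remaining gap is to pass from general progressively measurable integrands $\sigma$ taking values in $\sqrt{\bD}$ to \emph{continuous adapted} ones $f\in C([0,T]\times\Omega;\sqrt{\bD})$, which is exactly the definition of $\cQ_\bD$. This is a standard density step: given a progressively measurable $\sigma$ with values in the compact convex set $\sqrt{\bD}$, one approximates it in, say, $L^2(P_0\times dt)$ by continuous adapted functions (using mollification in time together with a measurable-to-continuous approximation that stays inside the compact convex set $\sqrt{\bD}$, which is possible precisely because $\sqrt{\bD}$ is convex); the corresponding stochastic integrals then converge in $L^2(P_0)$, hence in law, and since $\xi$ is bounded and continuous the expectations $E[\xi(\int f^k\,dB)]$ converge to $E[\xi(\int\sigma\,dB)]$. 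Taking suprema gives $\sup_{P\in\cQ_\bD}E^P[\xi]\ge\cE_\bD(\xi)$, and the reverse inequality is trivial from $\cQ_\bD\subseteq\cP_\bD$.

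Putting the pieces together: fix $\xi\in C_b(\Omega;\R)$ and $\eps>0$; choose by the strong-formulation representation a progressively measurable $\sigma$ with $\sigma_t\sigma_t'\in\bD$ such that $E^{P_0}[\xi(\int\sigma\,dB)]\ge\cE_\bD(\xi)-\eps$; approximate $\sigma$ by continuous adapted $f$ with values in $\sqrt{\bD}$ so that $E^{P_0}[\xi(\int f\,dB)]\ge\cE_\bD(\xi)-2\eps$; this shows $\sup_{\cQ_\bD}E[\xi]\ge\cE_\bD(\xi)$, and combined with $\cQ_\bD\subseteq\cP_\bD$ we get equality of the support functions of $\overline{\mathrm{conv}}\,\cQ_\bD$ and $\cP_\bD$ on $C_b(\Omega;\R)$. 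Since both sets are convex and weakly compact subsets of the space of probability measures on the Polish space $\Omega$, the bipolar theorem (Hahn--Banach separation against the dual $C_b(\Omega;\R)$, which separates points of $\mathcal{P}(\Omega)$) forces $\overline{\mathrm{conv}}\,\cQ_\bD=\cP_\bD$, which is the claim.

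The main obstacle I anticipate is the first reduction, namely establishing that the supremum over \emph{all} martingale laws in $\cP_\bD$ equals the supremum over strong (Brownian) formulations. This is not completely elementary — it is essentially the statement that the "weak" and "strong" volatility-uncertainty value functions coincide, which in the literature follows either from the PDE characterization of $\cE^G$ together with a verification/approximation argument, or from results in the $2$BSDE framework on approximation of controls; one must be a little careful because a general $P\in\cP_\bD$ need not support a Brownian motion, so the argument really goes through the common value $u(0,0)$ of the backward PDE rather than through a pathwise change of measure. The density step from progressively measurable to continuous adapted integrands is routine by comparison, relying only on convexity and compactness of $\sqrt{\bD}$ and $L^2$-continuity of the Itô integral.
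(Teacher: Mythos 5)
Your reduction via Hahn--Banach to the identity $\sup_{P\in\cQ_\bD}E^P[\xi]=\sup_{P\in\cP_\bD}E^P[\xi]$ for $\xi\in C_b(\Omega;\R)$ is fine, but the way you then dispose of that identity contains the real gap: you invoke ``the representation in \cite{DenisHuPeng.2010}'' and the identification $\cE_\bD=\cE^G$ to conclude that the supremum over all of $\cP_\bD$ equals the supremum over strong (Brownian) formulations. In this paper's logical structure that is circular: \cite{DenisHuPeng.2010} identifies the $G$-expectation with the supremum over a set $\cQ^*_\bD$ of \emph{strong} formulations sitting between $\cQ_\bD$ and $\cP_\bD$, and it is precisely Proposition~\ref{pr:density} (via Remark~\ref{rk:GexpIdentification}(i)) that is used to conclude that this also equals $\sup_{P\in\cP_\bD}E^P[\xi]$. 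So the ``weak $=$ strong'' equality you treat as a citable input is the substance of the proposition. A non-circular external route does exist --- Remark~\ref{rk:GexpIdentification}(ii) points to the PDE-based verification argument of \cite[Proposition~3.4]{SonerTouziZhang.2010rep} --- but you do not carry it out (one needs Itô's formula applied to the solution of the nonlinear PDE under an arbitrary $P\in\cP_\bD$, regularity/approximation of that solution, and an extension from cylinder functions to general $\xi\in C_b$), whereas the paper deliberately avoids the PDE machinery and proves the statement probabilistically: regularize (mix with an independent nondegenerate Gaussian martingale, using $\Gamma_*\in\bD$ invertible after reducing coordinates), discretize the volatility to piecewise constant, then \emph{randomize}: write the piecewise constant volatility as $\Theta_k(\tilde W|_{[0,t_k]},U^1,\dots,U^k)$ via regular conditional distributions, fix the uniform randomizers $u$, and use Fubini plus Hahn--Banach to place the law of $M$ in the closed convex hull of the Brownian-functional laws $\{\tilde M^u\}$; finally smooth the integrand.

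There is also a concrete error in your smoothing step: you claim one can approximate a measurable adapted $\sigma$ by continuous adapted functions ``staying inside the compact convex set $\sqrt{\bD}$, which is possible precisely because $\sqrt{\bD}$ is convex.'' For $d\geq 2$ the set $\sqrt{\bD}=\{\sqrt{\Gamma}:\Gamma\in\bD\}$ is in general \emph{not} convex (the matrix square root does not map convex sets to convex sets; already the image of a segment of commuting diagonal matrices is a curved arc), so this truncation device fails. The paper circumvents this by approximating with an unconstrained continuous $\tilde f$ valued in $\S^d$, projecting the \emph{square} onto the convex set $\bD$, setting $f=\sqrt{\Pi_\bD(\tilde f^2)}$, and controlling the error through the operator inequality $\|f-g\|^2\leq\|f^2-g^2\|$ from \cite[Theorem~X.1.1]{Bhatia.97}. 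In $d=1$ your argument is fine, but as stated the proposal neither proves the central weak/strong equality nor gives a correct multivariate smoothing argument.
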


We can now deduce the connection between $\cE_\bD$ and the $G$-expectation associated with $\bD$.

\begin{remark}\label{rk:GexpIdentification}
  (i) Proposition~\ref{pr:density} implies that
  \begin{equation}\label{eq:strongWeak}
    \sup_{P\in \cQ_{\bD}} E^P[\xi] = \sup_{P\in \cP_{\bD}} E^P[\xi],\quad \xi\in C_b(\Omega;\R).
  \end{equation}
  In \cite[Section~3]{DenisHuPeng.2010} it is shown that the $G$-expectation as introduced in~\cite{Peng.07, Peng.08} coincides with
  the mapping $\xi\mapsto \sup_{P\in \cQ^*_{\bD}} E^P[\xi]$ for a certain set $\cQ^*_{\bD}$ satisfying $\cQ_{\bD}\subseteq \cQ^*_{\bD} \subseteq \cP_{\bD}$. In particular, we deduce that the right hand side of~\eqref{eq:strongWeak} is indeed equal to the $G$-expectation, as claimed in Section~\ref{se:main}.

  (ii) A result similar to Proposition~\ref{pr:density} can also be deduced from~\cite[Proposition 3.4.]{SonerTouziZhang.2010rep}, which relies on a PDE-based verification argument of stochastic control. We include a (possibly more enlightening) probabilistic proof at the end of the section.
\end{remark}

\paragraph{Strong Formulation in Discrete Time.} For fixed $n\in\N$, we consider
\[
  \Omega_n:=\big\{\omega=(\omega_1,\dots,\omega_n):\, \omega_i\in \{1,\dots,d+1\},\; i=1,\dots,n\big\}
\]
equipped with its power set and let $P_n:=\{(d+1)^{-1},\dots,(d+1)^{-1}\}^n$ be the product probability associated with the uniform distribution. Moreover, let $\xi_1,\dots,\xi_n$ be an i.i.d.\ sequence of $\R^d$-valued random variables on $\Omega_n$ such that $|\xi_k|=d$ and such that the components of $\xi_k$ are orthonormal in $L^2(P_n)$, for each $k=1,\ldots,n$. Let $Z_k=\sum_{l=1}^k \xi_l$ be the associated random walk. Then, we consider martingales $M^f$ which are discrete-time integrals of $Z$ of the form
\[
  M^f_k = \sum_{l=1}^{k} f(l-1,Z) \Delta Z_l,
\]
where $f$ is measurable and adapted  with respect to the filtration generated by $Z$; i.e., $f(l,Z)$ depends only on $Z|_{\{0,\dots,l\}}$. We define
\[
  \cQ^n_\bD\hspace{-1pt}=\hspace{-1pt}\bigg\{\hspace{-1pt}P_n\circ (M^f)^{-1};\; f\hspace{-1pt}: \{0,\dots,n-1\}\times (\R^d)^{n+1}\hspace{-1pt}\to\sqrt{\bD}\mbox{ measurable, adapted} \hspace{-1pt}\bigg\}.
\]%remove hspaces
To see that $\cQ^n_\bD\subseteq \cP^n_\bD$, we note that
$\Delta_{k}M^f = f(k-1,Z)\xi_{k}$ and the orthonormality property of $\xi_{k}$ yield
\[
  E^{P_n}\big[ \Delta_{k}M^f\big(\Delta_{k}M^f\big)' \big| \sigma(Z_1,\dots,Z_{k-1})\big] = f(k-1,Z)^2\in\bD\quad P_n\as,
\]
while $|\xi_k|=d$ and $f^2\in\bD$ imply that
\[
  \big\|\Delta_{k}M^f\big(\Delta_{k}M^f\big)'\big\|=|f(k-1,Z)\xi_{k}|^2 \in \big[d^2r_\bD,d^2R_\bD\big]\quad P_n\as
\]

\begin{remark}\label{rk:constructionOfXi}
 We recall from~\cite{He.90} that such $\xi_1,\dots,\xi_n$ can be constructed as follows. Let $A$ be an orthogonal $(d+1)\times(d+1)$ matrix whose last row is $((d+1)^{-1/2},\dots,(d+1)^{-1/2})$ and let $v_l\in\R^{d}$ be column vectors such that $[v_1,\dots,v_{d+1}]$ is the matrix obtained from $A$ by deleting the last row. Setting $\xi_k(\omega):=(d+1)^{1/2}v_{\omega_k}$ for $\omega=(\omega_1,\dots,\omega_n)$ and $k=1,\ldots,n$, the above requirements are satisfied.
\end{remark}

We can now formulate a result which includes Theorem~\ref{th:limit}.

\begin{theorem}\label{th:limitRamification}
  Let $\xi: \Omega\to\R$ be as in Theorem~\ref{th:limit}. Then
  \begin{align}\label{eq:limitThmRamification}
    \lim_{n\to\infty} \sup_{P\in \cQ^n_{\bD/n}} E^P[\xi(\widehat{X^n})]
     &= \lim_{n\to\infty} \sup_{P\in \cP^n_{\bD/n}} E^P[\xi(\widehat{X^n})] \nonumber\\
     &=\sup_{P\in \cQ_{\bD}} E^P[\xi] \nonumber \\
     &=\sup_{P\in \cP_{\bD}} E^P[\xi].
  \end{align}
\end{theorem}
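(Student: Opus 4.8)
The plan is to prove the chain of equalities in~\eqref{eq:limitThmRamification} by combining the first inequality~\eqref{eq:limitFirstInequality}, already established, with a matching lower bound in the ``strong'' discrete formulation. The last equality, $\sup_{P\in \cQ_\bD} E^P[\xi] = \sup_{P\in \cP_\bD} E^P[\xi]$, is immediate from Proposition~\ref{pr:density}: the right-hand side is an upper bound since $\cQ_\bD\subseteq\cP_\bD$, and equality for bounded continuous $\xi$ follows from weak density of the convex hull of $\cQ_\bD$ together with affinity of $P\mapsto E^P[\xi]$; the extension to $\xi$ of polynomial growth is handled by the uniform integrability of $\|\omega\|_\infty^p$ over $\cP_\bD$ (which contains $\cQ_\bD$) from Lemma~\ref{le:discreteMomentEstimate}(i)/Lemma~\ref{le:finiteness}, via truncation. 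Since trivially $\cQ^n_{\bD/n}\subseteq\cP^n_{\bD/n}$, we have $\sup_{\cQ^n_{\bD/n}}\leq\sup_{\cP^n_{\bD/n}}$ for each $n$, and~\eqref{eq:limitFirstInequality} gives $\limsup_n \sup_{\cP^n_{\bD/n}} E^P[\xi(\widehat{X^n})] \leq \sup_{\cP_\bD} E^P[\xi]$. Therefore the whole theorem reduces to the single estimate
\[
  \liminf_{n\to\infty} \sup_{P\in \cQ^n_{\bD/n}} E^P[\xi(\widehat{X^n})] \;\geq\; \sup_{P\in \cQ_\bD} E^P[\xi],
\]
which also proves convergence of all four quantities to the common value.

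To prove this lower bound, I would fix an arbitrary element $P=P_0\circ(\int_0^\cdot f(t,B)\,dB_t)^{-1}$ of $\cQ_\bD$, with $f\in C([0,T]\times\Omega;\sqrt{\bD})$ adapted, and construct for each $n$ a discrete-time integrand $f_n:\{0,\dots,n-1\}\times(\R^d)^{n+1}\to\sqrt{\bD}$ so that the law of $\widehat{M^{f_n}}$ (under $P_n$, with the scaling $\bD/n$) converges weakly to $P$ and, moreover, $E^{P_n}|\xi(\widehat{M^{f_n}})|$ stays uniformly integrable so that $E^{P_n}[\xi(\widehat{M^{f_n}})]\to E^P[\xi]$. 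The natural choice is to discretize $f$ along the grid: since $\bD/n$-scaled integrands need $\sqrt{\bD/n}=n^{-1/2}\sqrt{\bD}$, set $f_n(k-1,Z):=\sqrt{T/n}\,\tilde f(\tfrac{(k-1)T}{n},\widehat{n^{-1/2}Z})$ where $\tilde f$ is a suitable continuous modification of $f$ evaluated at the interpolated rescaled random walk path $n^{-1/2}Z$ (rescaled so that $n^{-1/2}Z$ has unit-order increments). Then $M^{f_n}_k = \sum_{l=1}^k f_n(l-1,Z)\xi_l$ is a martingale transform of the random walk $Z$, whose interpolation $\widehat{M^{f_n}}$ is, by the Skorokhod/martingale functional CLT (or the continuous-mapping theorem applied to the Donsker-type convergence $\widehat{n^{-1/2}Z}\Rightarrow B$ and the stochastic-integral stability results of Kurtz–Protter), weakly convergent to $\int_0^\cdot f(t,B)\,dB_t$ under $P_0$, i.e., to $P$. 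The polynomial-growth bounds give uniform integrability exactly as in Lemma~\ref{le:finiteness} (the conditional-second-moment bound $f^2\leq R_\bD\cdot\idmat$ still controls the BDG estimate), so $\liminf_n\sup_{\cQ^n_{\bD/n}} E^P[\xi(\widehat{X^n})]\geq \liminf_n E^{P_n}[\xi(\widehat{M^{f_n}})]=E^P[\xi]$; taking the supremum over $P\in\cQ_\bD$ finishes the proof.

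The main obstacle I anticipate is the weak-convergence step for the martingale transforms: one must verify that $\widehat{M^{f_n}}\Rightarrow \int_0^\cdot f(t,B)\,dB_t$ when the integrand $f$ is path-dependent (adapted but not Markovian) and is evaluated along the discretized approximating paths rather than the limiting one. This is a joint-convergence statement — one needs tightness of $(\widehat{n^{-1/2}Z},\,\widehat{M^{f_n}})$ plus identification of the limit as a stochastic integral — and the delicate point is continuity of the Itô map in a strong enough topology, handled by the continuity and boundedness of $f$ together with a uniform modulus-of-continuity argument for the prelimit integrands (the increments of $f_n$ are controlled by the modulus of continuity of $f$ composed with the uniform tightness of the interpolated random walks). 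The other, more routine, ingredients are: the elementary verification $\cQ^n_\bD\subseteq\cP^n_\bD$ (already done in the excerpt), the moment bounds (already in Lemma~\ref{le:discreteMomentEstimate}), and the truncation argument upgrading from $C_b$ to polynomial growth (identical in structure to Lemma~\ref{le:finiteness} and the proof of Corollary~\ref{co:generalizedLimitThm}).
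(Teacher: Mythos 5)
Your proposal is correct and takes essentially the same route as the paper: reduce the theorem, via \eqref{eq:limitFirstInequality}, the inclusion $\cQ^n_{\bD/n}\subseteq\cP^n_{\bD/n}$ and Proposition~\ref{pr:density}, to the single lower bound $\liminf_n\sup_{P\in\cQ^n_{\bD/n}}E^P[\xi(\widehat{X^n})]\geq\sup_{P\in\cQ_\bD}E^P[\xi]$, and prove that bound by discretizing the continuous integrand $f$ of a fixed $P\in\cQ_\bD$ along the interpolated rescaled random walk, applying Donsker-type convergence together with the Duffie--Protter/Kurtz--Protter stability of stochastic integrals (which is exactly how the paper handles the path-dependent integrand), and using the moment estimates for uniform integrability. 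The only detail to watch is the normalization of the discrete integrand (the paper takes $n^{-1/2}f$ so that its square lies in $\bD/n$, whereas you wrote $\sqrt{T/n}\,f$), but this is constant-factor bookkeeping, not a difference in method.
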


\begin{proof}
  Since $\cQ^n_{\bD/n}\subseteq \cP^n_{\bD/n}$ for each $n\geq1$, the inequality~\eqref{eq:limitFirstInequality} yields that
  \[
    \limsup_{n\to\infty} \sup_{P\in \cQ^n_{\bD/n}} E^P[\xi(\widehat{X^n})] \leq \sup_{P\in \cP_{\bD}} E^P[\xi].
  \]
  As the equality in~\eqref{eq:limitThmRamification} follows from Proposition~\ref{pr:density}, it remains to show that
  \[
    \liminf_{n\to\infty} \sup_{P\in \cQ^n_{\bD/n}} E^P[\xi(\widehat{X^n})]  \geq \sup_{P\in \cQ_{\bD}} E^P[\xi].
  \]
  To this end, let $P\in\cQ_{\bD}$; i.e., $P$ is the law of a martingale of the form
  \[
    M=\int f(t,W)\,dW_t,
  \]
  where $W$ is a Brownian motion and $f\in C\big([0,T]\times \Omega;\sqrt{\bD}\big)$ is an adapted function. We shall construct
  martingales $M^{(n)}$ whose laws are in $\cQ^n_{\bD/n}$ and tend to $P$.

  For $n\geq1$, let $Z^{(n)}_k=\sum_{l=1}^k \xi_l$ be the random walk on $(\Omega_n,P_n)$ as introduced before Remark~\ref{rk:constructionOfXi}.
  Let
  \[
    W^{(n)}_t:= n^{-1/2} \sum_{k=1}^{[nt/T]} \xi_k,\quad 0\leq t\leq T
  \]
  be the piecewise constant c\`adl\`ag version of the scaled random walk and let $\hat{W}^{(n)}:=n^{-1/2}\widehat{Z^{(n)}}$ be its continuous counterpart obtained by linear interpolation. It follows from the central limit theorem that
  \[
    \big(W^{(n)},\hat{W}^{(n)}\big)\Rightarrow (W,W)\quad\mbox{on}\quad D([0,T];\R^{2d}),
  \]
  the space of c\`adl\`ag paths equipped with the Skorohod topology.
  Moreover, since $f$ is continuous, we also have that
  \[
    \big(W^{(n)},f\big([nt/T]T/n,\hat{W}^{(n)}\big)\big)\Rightarrow (W,f(t,W))\quad\mbox{on}\quad D([0,T];\R^{d+d^2}).
  \]
  Thus, if we introduce the discrete-time integral
  \[
    M^{(n)}_k:= \sum_{l=1}^k f\big((l-1)T/n,\hat{W}^{(n)}\big) \Big(\hat{W}^{(n)}_{lT/n} - \hat{W}^{(n)}_{(l-1)T/n}\Big),
  \]
  it follows from the stability of stochastic integrals (see \cite[Theorem~4.3 and Definition~4.1]{DuffieProtter.92}) that
  \[
    \Big(M^{(n)}_{[nt/T]}\Big)_{0\leq t\leq T} \Rightarrow M\quad\mbox{on}\quad D([0,T];\R^d).
  \]
  Moreover, since the increments of $M^{(n)}$ uniformly tend to $0$ as $n\to\infty$, it also follows that
  \[
    \widehat{M}^{(n)}\Rightarrow M\quad\mbox{on}\quad \Omega.
  \]
  As $f^2/n$ takes values in $\bD/n$, the law of $M^{(n)}$ is contained in $\cQ^n_{\bD/n}$ and the proof is complete.
\end{proof}

It remains to give the proof of Proposition~\ref{pr:density}, which we will obtain by a randomization technique. Since similar arguments, at least for the scalar case, can be found elsewhere (e.g., \cite[Section~5]{Kusuoka.95}), we shall be brief.

\begin{proof}[Proof of Proposition~\ref{pr:density}]
  We may assume without loss of generality that
  \begin{equation}\label{eq:nondegen}
    \mbox{there exists an invertible element }\Gamma_*\in\bD.
  \end{equation}
  Indeed, using that $\bD$ is a convex subset of $\S^d_+$, we observe that~\eqref{eq:nondegen} is equivalent
  to $K=\{0\}$ for $K:=\bigcap_{\Gamma\in\bD} \ker \Gamma$. If $k=\dim K>0$, a change of coordinates bring us to the situation where $K$ corresponds to
  the last $k$ coordinates of $\R^d$. We can then reduce all considerations to $\R^{d-k}$ and thereby recover the situation of~\eqref{eq:nondegen}.

  \emph{1.~Regularization.} We first observe that the set
  \begin{equation}\label{eq:regularization}
    \big\{P\in\cP_\bD:\,d\br{B}_t/dt \geq \eps\idmat_d\; P\times dt\ae\mbox{ for some }\eps>0\big\}
  \end{equation}
  is weakly dense in $\cP_\bD$. (Here $\idmat_d$ denotes the unit matrix.) Indeed, let $M$ be a martingale whose law is in $\cP_\bD$. Recall~\eqref{eq:nondegen} and let $N$ be an independent continuous Gaussian martingale with $d\br{N}_t/dt=\Gamma_*$. For $\lambda\uparrow 1$, the law of $\lambda M + (1-\lambda)N$ tends to the law of $M$ and is contained in the set~\eqref{eq:regularization}, since $\bD$ is convex.

  \emph{2.~Discretization.} Next, we reduce to martingales with piecewise constant volatility. Let $M$ be a martingale whose law belongs to~\eqref{eq:regularization}.
  We have
  \[
    M= \int \sigma_t\,dW_t\quad\mbox{for}\quad \sigma_t:=\sqrt{d\br{M}/dt} \quad\mbox{and}\quad W:=\int\sigma^{-1}_t\,dM_t,
  \]
  where $W$ is a Brownian motion by L\'evy's theorem. For $n\geq 1$, we introduce
  $
    M^{(n)}= \int \sigma^{(n)}_t\,dW_t,
  $
  where $\sigma^{(n)}$ is an $\S^d_+$-valued piecewise constant process satisfying
  \[
    \big(\sigma^{(n)}_t\big)^2 = \Pi_\bD \bigg[\bigg(\frac{n}{T}\int_{(k-1)T/n}^{kT/n} \sigma_s\,ds\bigg)^2\bigg],\quad t\in \big(kT/n,(k+1)T/n\big]
  \]
  for $k=1,\dots,n-1$, where $\Pi_\bD: \S^d\to\bD$ is the Euclidean projection.
  On $[0,T/n]$ one can take, e.g., $\sigma^{(n)}:=\sqrt{\Gamma_*}$. We then have
  \[
    E\big\|\big\langle M-M^{(n)}\big\rangle_T\big\|=E\int_0^T \big\|\sigma_t-\sigma^{(n)}_t\big\|^2\,dt\to0
  \]
  and in particular $M^{(n)}$ converges weakly to $M$.

  \emph{3.~Randomization}. Consider a martingale of the form $M= \int \sigma_t\,dW_t$, where $W$ is a Brownian motion on some given filtered probability space and $\sigma$ is an adapted $\sqrt{\bD}$-valued process which is piecewise constant; i.e.,
  \[
    \sigma=\sum_{k=0}^{n-1} \1_{[t_k, t_{k+1})} \sigma(k)\quad \mbox{for some}\quad0=t_0<t_1< \dots<t_n=T
  \]
  and some $n\geq1$.
  Consider also a second probability space carrying a Brownian motion $\tilde{W}$ and a
  sequence $U^1,\dots, U^n$ of $\R^{d\times d}$-valued random variables such that
  the components $\{U^k_{ij}:\, 1\leq i,j\leq d;\,1\leq k\leq n\}$ are i.i.d.\ uniformly distributed on $(0,1)$ and independent of $\tilde{W}$.

  Using the existence of regular conditional probability distributions, we can construct functions
  $\Theta_k: C([0,t_k];\R^d)\times(0,1)^{d^2}\times\dots\times (0,1)^{d^2}\to \sqrt{\bD}$ such that the random variables
  $
    \tilde{\sigma}(k):= \Theta_k(\tilde{W}|_{[0,t_k]}, U^1,\dots,U^k)
  $
  satisfy
  \begin{equation}\label{eq:discretrizeLaw}
   \big\{\tilde{W},\tilde{\sigma}(0), \dots,\tilde{\sigma}(n-1)\big\} = \big\{W,\sigma(0), \dots,\sigma(n-1)\big\}\quad\mbox{in law.}
  \end{equation}
  We can then consider the volatility corresponding to a fixed realization of $U^1,\dots, U^n$. Indeed, for
  $u=(u^1,\dots,u^n)\in (0,1)^{nd^2}$, let
  \[
    \tilde{\sigma}(k;u):= \Theta_k\big(\tilde{W}|_{[0,t_k]}, u^1,\dots,u^k\big)
  \]
  and consider $\tilde{M}^u=\int \tilde{\sigma}^u_t\,d\tilde{W}_t$, where
  $\tilde{\sigma}^u:=\sum_{k=0}^{n-1} \1_{[t_k,t_{k+1})} \tilde{\sigma}(k;u)$.
  For any $F\in C_b\big(\Omega;\R)$, the equality~\eqref{eq:discretrizeLaw} and Fubini's theorem yield that
  \begin{align*}
    E[F(M)] = E\big[F\big(\tilde{M}^{(U^1,\dots,U^n)}\big)\big]
    &= \int_{(0,1)^{nd^2}} E[F(\tilde{M}^u)]\,du \\
    &\leq \sup_{u\in (0,1)^{nd^2}} E[F(\tilde{M}^u)].
  \end{align*}
  Hence, by the Hahn-Banach theorem, the law of $M$ is contained in the weak closure of the convex hull of the laws of
  $\{\tilde{M}^u:\, u\in (0,1)^{nd^2}\}$. We note that $\tilde{M}^u$ is of the form $\tilde{M}^u=\int g(t,\tilde{W})\,d\tilde{W}_t$ with a measurable, adapted, $\sqrt{\bD}$-valued function $g$, for each fixed $u$.

  \emph{4.~Smoothing.} As $\cQ_D$ is defined through continuous functions, it remains to approximate $g$ by a continuous function $f$.
  Let $g: [0,T]\times \Omega\to\sqrt{\bD}$ be a measurable adapted function and $\delta>0$. By standard density arguments there exists $\tilde{f}\in C\big([0,T]\times \Omega;\S^d\big)$ such that
  \[
    E\int_0^T \|\tilde{f}(t,\tilde{W})-g(t,\tilde{W})\|^2\,dt\leq\delta.
  \]
  Let $f(t,x):=\sqrt{\Pi_\bD\big(\tilde{f}(t,x)^2\big)}$. Then $f\in C\big([0,T]\times \Omega;\sqrt{\bD}\big)$ and
  \[
    \|f-g\|^2 \leq \|f^2-g^2\|\leq \|\tilde{f}^2-g^2\| \leq (\|\tilde{f}\|+\|g\|)\|\tilde{f}-g\|\leq 2\sqrt{R_\bD}\,\|\tilde{f}-g\|
  \]
  (see \cite[Theorem~X.1.1]{Bhatia.97} for the first inequality). By Jensen's inequality we conclude that $E\int_0^T \|f(t,\tilde{W})-g(t,\tilde{W})\|^2\,dt \leq 2 \sqrt{T R_\bD\delta}$, which, in view of the above steps, completes the proof.
\end{proof}

%%%%%%%%%%%%%%%%%%%%%%%%%%%%%%%%%%%%%%%%%%%%%%%%%%%%%%%%%%%%%%%%%%%%%%%%%%%%%%%%%%%%%%%%%%%%%%%%%%%%%%%%%%%%%%%%%%%%%%%

%\bibliography{C:/Users/numadmin/Documents/tex/stochfin}

\begin{thebibliography}{10}

\bibitem{Bhatia.97}
R.~Bhatia.
\newblock {\em Matrix Analysis}.
\newblock Springer, New York, 1997.

\bibitem{CheriditoSonerTouziVictoir.07}
P.~Cheridito, H.~M. Soner, N.~Touzi, and N.~Victoir.
\newblock Second-order backward stochastic differential equations and fully
  nonlinear parabolic {PDE}s.
\newblock {\em Comm. Pure Appl. Math.}, 60(7):1081--1110, 2007.

\bibitem{DenisHuPeng.2010}
L.~Denis, M.~Hu, and S.~Peng.
\newblock Function spaces and capacity related to a sublinear expectation:
  application to {$G$}-{B}rownian motion paths.
\newblock {\em Potential Anal.}, 34(2):139--161, 2011.

\bibitem{DenisMartini.06}
L.~Denis and C.~Martini.
\newblock A theoretical framework for the pricing of contingent claims in the
  presence of model uncertainty.
\newblock {\em Ann. Appl. Probab.}, 16(2):827--852, 2006.

\bibitem{DolinskySoner.11}
Y.~Dolinsky and H.~M. Soner.
\newblock Limit theorems for binomial markets with friction.
\newblock {\em Preprint}, 2011.

\bibitem{DuffieProtter.92}
D.~Duffie and P.~Protter.
\newblock From discrete- to continuous-time finance: weak convergence of the
  financial gain process.
\newblock {\em Math. Finance}, 2(1):1--15, 1992.

\bibitem{He.90}
H.~He.
\newblock Convergence from discrete- to continuous-time contingent claims
  prices.
\newblock {\em Rev. Financ. Stud.}, 3(4):523--546, 1990.

\bibitem{Kusuoka.95}
S.~Kusuoka.
\newblock Limit theorem on option replication cost with transaction costs.
\newblock {\em Ann. Appl. Probab.}, 5(1):198--221, 1995.

\bibitem{Nutz.10Gexp}
M.~Nutz.
\newblock Random {$G$}-expectations.
\newblock {\em Preprint arXiv:1009.2168v1}, 2010.

\bibitem{PardouxPeng.90}
E.~Pardoux and S.~Peng.
\newblock Adapted solution of a backward stochastic differential equation.
\newblock {\em Systems Control Lett.}, 14(1):55--61, 1990.

\bibitem{Peng.97}
S.~Peng.
\newblock Backward {SDE} and related {$g$}-expectation.
\newblock In {\em Backward stochastic differential equations}, volume 364 of
  {\em Pitman Res. Notes Math. Ser.}, pages 141--159. Longman, 1997.

\bibitem{Peng.07}
S.~Peng.
\newblock {$G$}-expectation, {$G$}-{B}rownian motion and related stochastic
  calculus of {I}t{\^o} type.
\newblock In {\em Stochastic Analysis and Applications}, volume~2 of {\em Abel
  Symp.}, pages 541--567, Springer, Berlin, 2007.

\bibitem{Peng.08}
S.~Peng.
\newblock Multi-dimensional {$G$}-{B}rownian motion and related stochastic
  calculus under {$G$}-expectation.
\newblock {\em Stochastic Process. Appl.}, 118(12):2223--2253, 2008.

\bibitem{Peng.10}
S.~Peng.
\newblock Nonlinear expectations and stochastic calculus under uncertainty.
\newblock {\em Preprint arXiv:1002.4546v1}, 2010.

\bibitem{Peng.10CLT}
S.~Peng.
\newblock Tightness, weak compactness of nonlinear expectations and application
  to {CLT}.
\newblock {\em Preprint arXiv:1006.2541v1}, 2010.

\bibitem{SonerTouziZhang.2010bsde}
H.~M. Soner, N.~Touzi, and J.~Zhang.
\newblock Wellposedness of second order backward {SDEs}.
\newblock {\em To appear in Probab. Theory Related Fields}.

\bibitem{SonerTouziZhang.2010rep}
H.~M. Soner, N.~Touzi, and J.~Zhang.
\newblock Martingale representation theorem for the {$G$}-expectation.
\newblock {\em Stochastic Process. Appl.}, 121(2):265--287, 2011.

\end{thebibliography}
%\bibliographystyle{plain}

\newcommand{\dummy}[1]{}

\end{document}